\newtheorem{theorem}{Theorem}[section]
\newtheorem{lemma}[theorem]{Lemma}
\newtheorem{theo}[theorem]{Theorem}
\newtheorem*{maintheo}{Main Theorem}
\newtheorem*{llemma}{Lemma}
\newtheorem{proposition}[theorem]{Proposition}
\newtheorem{corollary}[theorem]{Corollary}
\theoremstyle{definition}
\newtheorem*{definition}{Definition}
\theoremstyle{remark}
\newtheorem*{remark}{Remark}
\renewcommand{\subsubsection}{\@startsection{subsubsection}{3}{0pt}{-3.25ex plus -1ex minus-.2ex}{1.5ex plus.2ex}{\normalfont\bfseries}}
\def\diag{\mathop{\rm diag}}
\def\Gr{\mathop{\rm Gr}}
\def\kk{{\Bbbk}}
\def\ZZ{{\mathbb Z}}
\def\RR{{\mathbb R}}
\def\NN{{\mathbb N}}
\def\QQ{{\mathbb Q}}
\def\conv{{\rm conv}}
\def\midd{{\rm mid}}
\def\kratno{\mathbin{\lower.2ex\hbox{$\m@th\vdots$}}}
\def\notkratno{\mathbin{\lower.2ex\hbox{$\m@th\not\vdots$}}}
\newcounter{itemnumber}
\begin{document}

\sloppy
\title[Normal Closures of Maximal Torus Orbits]{Simple
$SL(n)$-Modules\\with Normal Closures of Maximal Torus Orbits}
\author%
{K. Kuyumzhiyan}
\address{Department of Higher Algebra, Faculty of Mechanics and Mathematics,
Moscow State University, 119992 Moscow, Russia }
\email{karina@mccme.ru}

%
\begin{abstract}
Let $T$ be the subgroup of diagonal matrices in the group $SL(n)$.
The aim of this paper is to find all finite-dimensional simple
rational $SL(n)$-modules $V$ with the following property: for each
point $v\in V$ the closure $\overline{Tv}$ of its $T$-orbit is a
normal affine variety. Moreover, for any $SL(n)$-module without this
property a $T$-orbit with non-normal closure is constructed. The
proof is purely combinatorial: it deals with the set of weights of
simple $SL(n)$-modules. The saturation property is checked for each
subset in the set of weights.
\end{abstract}

 \maketitle

\section*{Introduction}

Let $T$ be an algebraic torus defined over an algebraically closed
field $\kk$ of characteristic zero. Recall that an irreducible
algebraic $T$-variety $X$ is called \textit{toric} if $X$ is normal and $T$
acts on $X$ with an open orbit. This class of varieties plays an
important role in algebraic geometry, topology and combinatorics due
to its remarkable description in terms of convex geometry,
see~\cite{Ful}. Assume that the torus $T$ acts on a variety $Y$.
Then the closure $X=\overline{Ty}$ of the $T$-orbit of a point $y\in
Y$ is a natural candidate to be a toric variety. To verify it, one
should check that $X$ is normal.

 During last decades, normality of
torus orbits' closures was an object of numerous investigations.
 For example, let $G$ be a semisimple algebraic group with a Borel
subgroup $B$ and a maximal torus $T\subset B$.
 In~\cite{Kl}, it was proved that the closure of a
general $T$-orbit on the flag variety $G/B$ is normal. Later it was
shown that the closure of a general $T$-orbit in $G/P$, where
$P\subset G$ is a parabolic subgroup, is also normal, see~\cite{Da}.
Examples of non-normal closures of non-general torus orbits can be
found in~\cite{GP}.

Now let us consider a finite-dimensional rational $T$-module $V$.
There exists an easy combinatorial criterion of normality of
$\overline{Tv}$ for a vector $v\in V$. Namely, let
$v=v_{\chi_1}+\dots+v_{\chi_m}$, $v_{\chi_i}\neq 0$, be the weight
decomposition of the vector $v$. Consider the corresponding set of
T-weights $\chi_1, \ldots,\chi_m$. If we take $\chi_1,
\ldots,\chi_m$ as elements of the character lattice $\mathfrak X(T
)$, we can generate a semigroup $\ZZ_+(\chi_1, \ldots,\chi_m)$, a
sublattice $\ZZ(\chi_1, \ldots,\chi_m)$, and a rational polyhedral
cone $\QQ_+(\chi_1, \ldots,\chi_m)$. The set $\chi_1, \ldots,\chi_m$
is called \textit{saturated} if $\ZZ_+(\chi_1,
\ldots,\chi_m)=\ZZ(\chi_1, \ldots,\chi_m)\cap\QQ_+(\chi_1,
\ldots,\chi_m)$. It is well known (see~\cite[page 5]{KKMSD}) that
the following two conditions are equivalent: the set $\{\chi_1,
\ldots,\chi_m\}$ is saturated and the closure $\overline{Tv}$ of the
$T$-orbit $Tv$ is normal. There is an analogous criterion for the
$T$-action on the projectivisation $\mathbb P(V)$, see~\cite{GP}.

The saturation property occurs in many algebraic and geometric
problems. In~\cite{Wh}, it was proved that the set of incidence
vectors of the bases of a realizable matroid is saturated. The
geometric conclusion of this fact is that for any point $y$ in the
affine cone over the classical Grassmannian $\Gr(k,n)$ the closure
$\overline{Ty}$ is normal.

Taken a finite graph $\Gamma$ with $n$ vertices, one can associate a
finite collection $M(\Gamma)$ of vectors in the lattice $\ZZ^n$ with
it:
$$
M(\Gamma)=\{\varepsilon_i+\varepsilon_j\mid (ij)\;\text{is an edge
of}\;\Gamma\},
$$
where $\varepsilon_1,\varepsilon_2,\ldots,\varepsilon_n$ is the standard basis of $\ZZ^n$. The saturation property for this set is equivalent
to the fact that for arbitrary two minimal odd cycles $C$ and $C'$
in $\Gamma$, either $C$ and $C'$ have a common vertex or there
exists an edge of $\Gamma$ joining a vertex of $C$ with a vertex of
$C'$ (see~\cite{3} and \cite{5}). Algebraically, the saturation
property for~$M(\Gamma)$ is equivalent to the integral closureness
for the subalgebra $\mathcal A(\Gamma)$ of the polynomial algebra
$\kk[x_1,x_2,\ldots,x_n]$,
$$
\mathcal A(\Gamma)=\kk[x_ix_j\mid (ij)\;\text{is an edge of}\;\Gamma]
$$
in its field of fractions $Q\mathcal A(\Gamma)$.

Some general results concerning quivers and the saturation property
were obtained in~\cite{Chi}. It was shown that a finite, connected
quiver $Q$ without oriented cycles is a Dynkin or Euclidean quiver
if and only if all orbit semigroups of representations of $Q$ are
saturated.

In the paper~\cite{JM}, the following problem is solved. Let $G$ be
a semisimple algebraic group with a maximal torus $T$ and $V$ be its
adjoint module. For which $G$ for all $v\in V$ the closure
$\overline{Tv}$ is normal? The surprising fact is that for $G=SL(n)$
this is always the case (see also~\cite[Ex. 3.7]{Stu}, \cite{Stumf},
\cite{JM}~and~\cite[Prop.2.1]{BZ}). In~\cite{BZ}, this combinatorial
result is interpreted in terms of representations of quivers.

\bigskip

The aim of this paper is to classify all simple finite-dimensional
rational $SL(n)$-modules $V$ such that for any $v\in V$ the closure
$\overline{Tv}$ is normal.


\begin{maintheo}
The representations below, together with their dual, form the list
of all irreducible representations of $SL(n)$ where all maximal
torus orbits' closures are normal:

\begin{enumerate}
\item[(1)] the tautological representation of $SL(n)$;

\item[(2)] the adjoint representation of $SL(n)$;

\item[(3)] exceptional cases:

\smallskip
\begin{center}
\begin{tabular}{|c|c|c|}
\hline Group & Highest weight& $G$-module\\
\hline
$SL(2)$ &$3\pi_1$&$S^3\kk^2$ \\
\hline
$SL(2)$ &$4\pi_1$&$S^4\kk^2$ \\
\hline
$SL(3)$&$2\pi_1$&$S^2\kk^3$\\
\hline
$SL(4)$&$\pi_2$&$\Lambda^2\kk^4$\\
\hline
$SL(5)$&$\pi_2$&$\Lambda^2\kk^5$\\
\hline
$SL(6)$&$\pi_2$&$\Lambda^2\kk^6$\\
\hline
$SL(6)$&$\pi_3$&$\Lambda^3\kk^6$\\
\hline
\end{tabular}
\end{center}
\smallskip

\end{enumerate}

\end{maintheo}

The paper is organized as follows. In Section~1 we give some
algebraic definitions and reformulate the problem in combinatorial
terms. From that point, it remains to check the saturation property
for any subset in the system of $T$-weights of a simple
$SL(n)$-module. In Section~2 we prove that the saturation property
holds for each subset in the set of weights of the representations
listed in the Main Theorem. We also give a new proof of
theorem~\cite[Thm.1]{JM}. It is the most non-trivial positive case,
where the dimension of $V$ is not bounded. When possible, reasoning
uses the graph theory language. Our reference for graph theory
is~\cite{Har}. In Section~3 we produce non-saturated subsets in sets
of weights for all other representations. If the set of weights of
the representation with the highest weight $\lambda$ is a subset in
the set of weights of the representation with the highest weight
$\mu$, and a non-saturated subset for $\lambda$ is known, then one
can use it as a non-saturated subset for $\mu$. Fundamental
representations form the most difficult case. To work with them, we
use the following observation. If a non-saturated subset in the set
of weights of the $k$th fundamental representation of $SL(n)$ is
found, then the analogous non-saturated subset exists in the set of
weights of the $k$th fundamental representation of $SL(n+k)$.

Some auxiliary results used in the proof may be of independent
interest.

\begin{llemma}[Reformulation of Lemma~\ref{utv0}]
Let $M_6=\{(\varepsilon_1,\ldots,\varepsilon_6)\mid \varepsilon_i=\pm 1,
\sum \varepsilon_i=0\}$ be the set of points in $\RR^6$. Then each
subset of this set is saturated.
\end{llemma}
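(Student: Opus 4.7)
The plan is to exploit two features of $M_6$: its small cardinality ($|M_6| = 20$) and its large symmetry group $S_6 \times \ZZ/2$, which acts transitively on $M_6$ via coordinate permutations and global negation. Since the saturation property is invariant under this symmetry, it will suffice to handle one representative from each orbit of subsets $S \subseteq M_6$.

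For a fixed $S$ I would proceed by strong induction on $\|v\|_1$ to prove $v \in \ZZ_+(S)$ for every $v \in \ZZ(S) \cap \QQ_+(S)$. The inductive step reduces to finding some $s_0 \in S$ with $v - s_0 \in \QQ_+(S)$: the difference $v - s_0$ automatically lies in $\ZZ(S)$ and has smaller $\ell^1$-norm, so the induction hypothesis applies. Writing $v = \sum_{s \in S} \alpha_s s$ with $\alpha_s \in \QQ_{\ge 0}$, an obvious choice of $s_0$ exists whenever some $\alpha_s \ge 1$. If instead all $\alpha_s \in [0, 1)$, then because $\|s\|_\infty = 1$ for every $s \in M_6$, the entries of $v$ are uniformly bounded: $|v_i| \le \sum_s \alpha_s < |S| \le 20$. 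Combined with $v \in \ZZ^6$, this leaves only finitely many ``small'' candidates $v$ to check.

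In each such small case I would use the quadratic exchange relations $s_1 + s_2 = t_1 + t_2$ (which are plentiful in $M_6$) to redistribute the coefficients $\alpha_s$, with the goal of either pushing some coefficient above $1$ or recognizing $v$ as a non-negative integer combination directly. The main obstacle will be that the partners $t_1, t_2$ produced by such an exchange need not lie in $S$, so the move can take us outside the sub-semigroup being studied. Here I would lean on the fact that the full set $M_6$ is saturated---a consequence of White's theorem \cite{Wh} applied to the uniform matroid $U_{3,6}$, whose bases are exactly the 3-subsets of $\{1,\ldots,6\}$---which guarantees a supply of integer combinations inside $\ZZ_+(M_6)$ that can be manipulated. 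After the $S_6 \times \ZZ/2$-symmetry reduction and the bounded-norm argument, the remaining problematic subsets $S$ form a short explicit list that can be treated case by case, each time producing the required $s_0$ (or a direct integer decomposition of $v$) from the specific combinatorial structure of $S$.
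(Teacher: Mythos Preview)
Your induction step contains a genuine error: the claim that $v - s_0$ ``automatically has smaller $\ell^1$-norm'' is false. Take $s_1 = (1,1,1,-1,-1,-1)$ and $s_2 = (1,-1,-1,1,1,-1)$, both in $M_6$, and let $v = s_1 + s_2 = (2,0,0,0,0,-2)$. Then $\|v\|_1 = 4$, but $\|v - s_1\|_1 = \|s_2\|_1 = 6 > 4$; the same happens for $v - s_2$. So subtracting an element of $S$ can \emph{increase} the $\ell^1$-norm, and your descent is not well-founded. You could try to repair this by inducting instead on the minimum of $\sum_s \alpha_s$ over all $\QQ_+$-representations of $v$, but then the ``small $v$'' case you describe no longer controls the actual coordinates of $v$ in the way you want, and in any event the residual case analysis is enormous: $M_6$ has $2^{20}$ subsets and $|S_6 \times \ZZ/2| = 1440$, leaving on the order of several hundred orbits of subsets to treat. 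Your exchange-relation idea does not cut this down, precisely because (as you acknowledge) the partners $t_1, t_2$ need not lie in $S$, and the appeal to White's theorem for the full $M_6$ gives you nothing about a proper subset $S$.

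The paper avoids all of this with a single structural lemma and no case analysis of subsets at all. The key observation (Lemma~\ref{utv0}) is that \emph{every} linearly independent $5$-tuple $v_1,\dots,v_5 \in M_6$ generates the same lattice, namely $\{x \in \ZZ^6 : \sum x_i \equiv 0 \pmod 3\}$ intersected with the hyperplane $\sum x_i = 0$. This is proved by a short graph argument on five vertices (after using $v_i \leftrightarrow -v_i$ to arrange that all $v_i$ share a common $+1$ coordinate). Once this is known, saturation of any subset $S$ is immediate: given $v \in \ZZ(S) \cap \QQ_+(S)$, Carath\'eodory (Lemma~\ref{l.oln}) lets you write $v = q_1 v_{i_1} + \dots + q_s v_{i_s}$ with the $v_{i_j}$ linearly independent and $q_j \in \QQ_+$; the lattice lemma then forces this \emph{same} combination to be the unique $\ZZ$-combination, so each $q_j \in \ZZ$ and hence $q_j \in \ZZ_+$. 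The whole argument is uniform in $S$ --- there is no enumeration of subsets whatsoever.
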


In further publications, we plan to give a classification of simple
$G\text{-modules}$ with normal $T\text{-orbit}$ closures for other
simple algebraic groups $G$.

The author is grateful to her scientific supervisor I.V. Arzhantsev
for the formulation of the problem and fruitful discussions. Thanks
are also due to I.I. Bogdanov for useful comments.

\section{Algebraic background and notation}

Let $V$ be a finite-dimensional rational $T$-module. Given any
character $\chi$ from the character lattice $\mathfrak X(T)$, define
a weight subspace $V_{\chi}$ as $V_{\chi}=\{v\in V\mid t\cdot
v=\chi(t)v\}$. It is well known that
$V=\bigoplus\limits_{\chi\in\mathfrak X(T)}V_{\chi}$, and only
finitely many $V_{\chi}$ are nonzero. The set
$\{\chi_1,\chi_2,\ldots,\chi_k\}$ of those $\chi_i$ for which
$V_{\chi_i}\neq 0$ is called \textit{the system of weights} of the
$T$-module V.

Let $\ZZ_+$ and $\QQ_+$ denote the sets of integer and rational
non-negative numbers, respectively; and let
$v_1,v_2,\ldots,v_m\in\QQ^n$. Consider the semigroup $\ZZ_+(v_1,
v_2, \dots, v_m)=\{n_1v_1+n_2v_2+\ldots+n_mv_m\mid
n_i\in\ZZ_+\}\subseteq\mathfrak X(T)$, the sublattice $\ZZ(v_1, v_2,
\dots, v_m)=\{z_1v_1+z_2v_2+\ldots+z_mv_m\mid
z_i\in\ZZ\}\subseteq\mathfrak X(T)$, and the rational polyhedral
cone $\QQ_+(v_1, v_2, \dots, v_m)=\{q_1v_1+q_2v_2+\ldots+q_mv_m\mid
q_i\in\QQ_+\}\subseteq\mathfrak X(T)\otimes_{\ZZ}\QQ$. Define the
following important property of the set $\{v_1,v_2,\ldots,v_m\}$.

\begin{definition}
The set of points $\{v_1, v_2, \dots, v_m\}\subset\QQ^n$ is called
\emph{saturated} if
$$
\ZZ_+ (v_1, v_2, \dots, v_m) = \ZZ (v_1, v_2, \dots, v_m) \cap \QQ_+
(v_1, v_2, \dots, v_m).
$$
\end{definition}

The following result provides a well-known combinatorial criterion
of normality of the torus orbit closure, see~\cite{KKMSD}:

\begin{theo}
Consider a rational linear action of a torus $T$ on a vector space $V$. Let $v=v_{\lambda_1}+\dots+v_{\lambda_s}$, $v_{\lambda_i}\neq 0$, be
its weight decomposition. Then the
closure $\overline{Tv}$ is normal if and only if the set of
characters $\{\lambda_1, \dots, \lambda_s\}$ is saturated.
\end{theo}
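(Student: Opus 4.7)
The plan is to identify the coordinate ring of $\overline{Tv}$ with a semigroup algebra on $\ZZ_+(\lambda_1,\ldots,\lambda_s)$, and then invoke the classical fact that an affine semigroup algebra is normal precisely when the semigroup is saturated in its group. This reduces the geometric statement to a purely combinatorial one about semigroups.

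First I would parametrise the orbit explicitly. Consider the morphism $\psi:T\to V$ defined by $\psi(t)=tv=\sum_{i=1}^{s}\lambda_i(t)\,v_{\lambda_i}$. Its image is $Tv$, and its closure $\overline{Tv}$ lies in the affine span $U=\mathrm{span}(v_{\lambda_1},\ldots,v_{\lambda_s})$. Choose coordinates $y_1,\ldots,y_s$ on $U$ dual to the weight vectors. Then the comorphism $\psi^{\ast}:\kk[U]=\kk[y_1,\ldots,y_s]\to\kk[T]$ sends $y_i$ to the character $\lambda_i$, viewing $\kk[T]$ as the group algebra $\kk[\mathfrak{X}(T)]$. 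Since $\psi$ has dense image in $\overline{Tv}$, the comorphism realises $\kk[\overline{Tv}]$ as the subalgebra of $\kk[T]$ generated by the $\lambda_i$, which is exactly the semigroup algebra $\kk[\ZZ_+(\lambda_1,\ldots,\lambda_s)]$.

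Next I would invoke the normality criterion for affine semigroup algebras: for a finitely generated subsemigroup $S\subset\mathfrak{X}(T)$, the integral closure of $\kk[S]$ inside its field of fractions $\kk(\ZZ S)$ equals $\kk[\ZZ S\cap\QQ_+ S]$. One checks that every $\chi\in\ZZ S\cap\QQ_+ S$ satisfies $m\chi\in S$ for some integer $m\ge 1$, so its monomial satisfies a monic integral equation over $\kk[S]$; conversely one shows $\kk[\ZZ S\cap\QQ_+ S]$ is itself normal, for instance by writing the rational polyhedral cone $\QQ_+ S$ as a finite intersection of half-spaces and expressing this semigroup algebra as the corresponding intersection of normal valuation rings indexed by the facets.

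Combining these two steps, $\overline{Tv}$ is normal if and only if $\kk[\ZZ_+(\lambda_1,\ldots,\lambda_s)]$ coincides with its integral closure, if and only if $\ZZ_+(\lambda_1,\ldots,\lambda_s)=\ZZ(\lambda_1,\ldots,\lambda_s)\cap\QQ_+(\lambda_1,\ldots,\lambda_s)$, which is the asserted saturation condition. The only substantial obstacle is the semigroup-normality lemma; the orbit parametrisation is almost tautological once one fixes coordinates dual to the weight basis. I would also remark that the criterion is intrinsically determined by the set of weights appearing in the decomposition of $v$, not by the nonzero coefficients, since the $T$-action rescales each weight component independently, which is precisely what makes the combinatorial approach of the subsequent sections viable.
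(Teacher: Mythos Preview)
Your argument is correct and is the standard one: identify $\kk[\overline{Tv}]$ with the semigroup algebra $\kk[\ZZ_+(\lambda_1,\ldots,\lambda_s)]$ via the orbit map, and then use that the integral closure of an affine semigroup algebra $\kk[S]$ in its fraction field is $\kk[\ZZ S\cap\QQ_+ S]$. The paper itself does not prove this theorem; it simply quotes it as a well-known result with a reference to \cite{KKMSD}, so there is nothing further to compare.
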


\begin{corollary}
Given a rational linear action of torus $T$ on a vector space $V$;
let $\{\lambda_1, \ldots, \lambda_s\}$ be the set of weights of this
action. Then the closure $\overline{Tv}$ is normal for each $v\in V$
if and only if each subset in $\{\lambda_1, \dots, \lambda_s\}$ is
saturated.
\label{cor12}
\end{corollary}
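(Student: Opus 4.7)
The corollary is an immediate consequence of the preceding Theorem, so the plan is essentially to note that subsets of the full weight system are in bijection with supports of vectors in $V$, then apply the theorem in both directions.

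For the implication ``$\Rightarrow$'', the plan is: assume $\overline{Tv}$ is normal for every $v\in V$, and fix an arbitrary subset $S\subseteq\{\lambda_1,\ldots,\lambda_s\}$. Because each $\lambda_i$ belongs to the system of weights, the weight space $V_{\lambda_i}$ is nonzero, so I can pick $v_{\lambda_i}\in V_{\lambda_i}\setminus\{0\}$ for each $\lambda_i\in S$ and form $v=\sum_{\lambda_i\in S}v_{\lambda_i}$. Then the weight decomposition of $v$ has precisely $S$ as its set of characters. By hypothesis $\overline{Tv}$ is normal, so the Theorem forces $S$ to be saturated.

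For the implication ``$\Leftarrow$'', the plan is dual: take an arbitrary $v\in V$ and write its weight decomposition $v=v_{\mu_1}+\cdots+v_{\mu_k}$ with $v_{\mu_j}\neq 0$. Each $\mu_j$ must lie in the system of weights of $V$, so $\{\mu_1,\ldots,\mu_k\}$ is a subset of $\{\lambda_1,\ldots,\lambda_s\}$. By hypothesis this subset is saturated, and the Theorem then gives normality of $\overline{Tv}$.

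There is no real obstacle here: the only small point worth writing out explicitly is that every subset $S$ of the weight system genuinely arises as the support of some vector (which is what justifies quantifying over subsets rather than only over vectors), and this follows from the nonvanishing of the weight spaces $V_{\lambda_i}$. Thus the corollary reduces cleanly to two applications of the Theorem, one for each direction.
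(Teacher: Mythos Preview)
Your proof is correct and is exactly the argument the paper has in mind; indeed the paper states the corollary without proof, treating it as an immediate consequence of the preceding theorem, and your two-direction argument (realizing each subset as the support of a vector, and conversely reading off the support of an arbitrary vector) is the natural way to unpack that.
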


\begin{remark}
The weight system is multiplied by $-1$ while changing a
representation $V$ of torus $T$ with its dual representation. Hence
the property of normality of all $T$-orbits is preserved.
\end{remark}

Let $G=SL(n)$. We fix a maximal torus $T\subset G$ consisting of all
diagonal matrices. An element $a=(a_1,a_2,\ldots,a_n)$ of the
lattice $\ZZ^n$ can be interpreted as a character $\chi_a$ of the
torus $T$ in the following way: $\chi_a(t)=t_1^{a_1}t_2^{a_2}\ldots
t_n^{a_n}$, where $t=\diag (t_1,t_2,\ldots,t_n)$. Since
$t_1t_2\ldots t_n=1$, the points $a$ and $b$ define the same
character if and only if $a-b=\alpha (1,1,\ldots,1)$. Each $a$ from
$\ZZ^n$ has a unique representation
$a=\tilde{a}+\alpha(1,1,\ldots,1)$, where $\tilde{a}\in\QQ^n$,
$\alpha\in\QQ$, and $\sum\tilde{a_i}=0$.

Let $\varepsilon_1,\varepsilon_2,\ldots,\varepsilon_n$ be the
standard basis of the lattice~$\ZZ^n$, and
$$
e_i=\widetilde{\varepsilon_i}=\left(-\frac 1n,-\frac
1n,\ldots,-\frac 1n,
\mathop{\frac{n-1}{n}}\limits_{\mathstrut i\text{th place}},
-\frac 1n,\ldots,-\frac 1n\right).
$$
Notice that $e_1,e_2,\ldots,e_n$ (further referred to as a {\it
quasi-basis}) satisfy the only linear relation
$$
e_1+e_2+\ldots+e_n=0. \eqno(*)
$$
Identify $\mathfrak X(T)$ with the $\ZZ$-lattice generated by
$e_1,e_2,\ldots, e_n$. Recall that a weight $\chi_a$ is called
\textit{dominant} if and only if $a_1\geqslant a_2\geqslant\ldots\geqslant
a_n$. The \textit{root lattice} for $SL(n)$ is a lattice generated by the vectors $e_1-e_2$, $e_2-e_3,\ldots,e_{n-1}-e_n$. Due to the ambiguity of notation,
$$
\Phi=\{a_1e_1+a_2e_2+\ldots+a_ne_n\mid a_1+a_2+\ldots+a_n\kratno
n\}.
$$

For a positive integer $s\mid n$ (i.e. $n=ss', s'\in\ZZ$), define
$$
\ZZ_{\equiv 0(s)}(e_1,\dots,e_n)=\left\{\sum\nolimits_{i=1}^n
x_ie_i\mid x_i\in\ZZ,\;\sum\nolimits_{i=1}^n x_i\kratno s\right\}
\;.
$$
In this notation the root lattice $\Phi$ coincides with $\ZZ_{\equiv
0(n)}(e_1,\dots,e_n)$.

Let $V$ be a finite-dimensional simple rational $SL(n)$-module,
$M(V)$ be the system of weights of the module~$V$ with respect to
the restricted action $T\colon V$. Introduce a partial order on
$M(V)$: $\mu\succeq\nu$ if and only if for $\xi=\mu-\nu$ the
following conditions hold: $\xi_1\geqslant0$,
$\xi_1+\xi_2\geqslant0$,
$\ldots,\xi_1+\xi_2+\ldots+\xi_{n-1}\geqslant0$. It is well known
that $M(V)$ contains the only maximal element $\lambda$ with respect
to $\succeq$, it is called \textit{the highest weight} of the
module. The weight $\lambda$ is dominant, moreover, for any dominant
weight $\lambda\in\mathfrak X(T)$ there exists a unique simple
$SL(n)$-module $V(\lambda)$ with the highest weight~$\lambda$
(see~\cite[\S 20,\S 21]{Hum}). The role of the Weyl group $W$ is
played here by the permutation group $S_n$, which acts on $\ZZ^n$ by
permutations of coordinates. It is well known (see~\cite[\S 13,\S
21]{Hum} or ~\cite[Chapter 4]{VO}) that
$$
M(\lambda):=M(V(\lambda))=\conv\{w\lambda\mid w\in
W\}\cap(\lambda+\Phi),
$$
where $\conv(M)$ denotes the convex hull of the set $M\subset\RR^n$.

In our situation, Corollary~\ref{cor12} can be reformulated in the following way:
\begin{proposition}
Let $V(\lambda)$ be a simple module of a semisimple group $G$ with
the highest weight $\lambda$. Then the closure of each $T$-orbit in
$V(\lambda)$ is normal if and only if each subset in $M(\lambda)$ is
saturated.
\end{proposition}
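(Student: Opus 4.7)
The plan is to read this proposition as a direct specialization of Corollary~\ref{cor12} to the $T$-module obtained by restricting the $G$-action on $V(\lambda)$ to the maximal torus $T\subset G$. Since Corollary~\ref{cor12} is already stated for arbitrary rational linear $T$-actions on finite-dimensional vector spaces, there is essentially no new argument to produce: all I need to do is identify the ingredients on both sides of the biconditional.

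First, I would observe that the system of $T$-weights of the restricted module $V(\lambda)$ is, by the definition of $M(V)$ given at the start of Section~1, precisely $M(V(\lambda)) = M(\lambda)$. Thus ``each subset of the weight system of the $T$-action on $V(\lambda)$'' in Corollary~\ref{cor12} reads as ``each subset of $M(\lambda)$'' in the proposition. The only point that is worth making explicit is that every subset $S \subseteq M(\lambda)$ actually arises as the set of weights appearing in the weight decomposition of some vector of $V(\lambda)$. This is immediate from the definition $M(\lambda) = \{\chi \mid V(\lambda)_\chi \neq 0\}$: for each $\mu \in S$ pick any nonzero $v_\mu \in V(\lambda)_\mu$, and put $v = \sum_{\mu \in S} v_\mu$; then the weight decomposition of $v$ is exactly indexed by $S$. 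Conversely, the support of the weight decomposition of any vector of $V(\lambda)$ is of course a subset of $M(\lambda)$.

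Combining these two observations, the equivalence ``$\overline{Tv}$ is normal for every $v \in V(\lambda)$ $\Longleftrightarrow$ every subset of $M(\lambda)$ is saturated'' reduces to the corresponding equivalence in Corollary~\ref{cor12}, which in turn rests on the Theorem of~\cite{KKMSD} cited above. There is no substantive obstacle: the proposition is purely a bookkeeping step which repackages the general criterion in a form convenient for the classification problem. The real work of the paper, to be carried out in the subsequent sections, will be to decide which weight systems $M(\lambda)$ for $SL(n)$-modules have the property that every one of their subsets is saturated.
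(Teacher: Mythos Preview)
Your proposal is correct and matches the paper's approach exactly: the paper states the proposition with no proof, introducing it only as ``In our situation, Corollary~\ref{cor12} can be reformulated in the following way,'' so it is indeed treated as an immediate specialization. If anything, you are slightly more explicit than the paper in noting that every subset $S\subseteq M(\lambda)$ is realized as the weight support of some vector, which is the only (trivial) point needed to pass from Corollary~\ref{cor12} to the proposition.
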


\section{Positive results}

In this section we prove that certain sets of weights are saturated.
We use the following lemmas, their proof can be found in~\cite{JM}.

\begin{lemma}
Let $M$ be a non-saturated set and $\alpha$ be a vector such that
$\alpha\in M$ and $-\alpha\in M$. Then either $M\backslash\{
\alpha\}$ or $M\backslash \{-\alpha\}$ is non-saturated.\label{l.o-}
\end{lemma}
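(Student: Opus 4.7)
The plan is to prove the contrapositive: assuming both $M_1 := M \setminus \{\alpha\}$ and $M_2 := M \setminus \{-\alpha\}$ are saturated, I will show that $M$ itself is saturated. Pick any $v \in \ZZ(M) \cap \QQ_+(M)$; the task is to exhibit $v$ as an element of $\ZZ_+(M)$.

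First I would observe that the three lattices $\ZZ(M)$, $\ZZ(M_1)$, $\ZZ(M_2)$ coincide, since each of $M_1$ and $M_2$ still contains at least one of $\pm\alpha$, and the $\ZZ$-span of that element already includes the removed one. So $v$ lies in both $\ZZ(M_1)$ and $\ZZ(M_2)$ automatically; the only thing left to arrange is membership in $\QQ_+(M_i)$ for one of the two indices.

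The crux is a normalization of the rational conic representation of $v$. Writing
$$v \;=\; q_{\alpha}\alpha \,+\, q_{-\alpha}(-\alpha) \,+\, \sum_{w \neq \pm\alpha} q_w w$$
with all $q_w \in \QQ_+$, set $t := \min(q_{\alpha}, q_{-\alpha})$ and subtract the trivial combination $t\alpha + t(-\alpha) = 0$. This replaces the pair $(q_{\alpha}, q_{-\alpha})$ by $(q_{\alpha}-t, q_{-\alpha}-t)$ without changing $v$ and without introducing negative coefficients, while forcing at least one of the two to vanish. Hence $v \in \QQ_+(M_1)$ or $v \in \QQ_+(M_2)$; combining with the lattice equality of the previous paragraph, the saturation hypothesis on the corresponding $M_i$ delivers $v \in \ZZ_+(M_i) \subseteq \ZZ_+(M)$, as required.

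I do not foresee any real obstacle: the whole argument rests on the single identity $\alpha+(-\alpha)=0$, which lets the conic coefficients be reshuffled freely. No combinatorial features of the ambient weight system are used, so the lemma will hold verbatim for arbitrary finite subsets of $\QQ^n$ containing an antipodal pair.
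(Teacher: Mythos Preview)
Your argument is correct. The contrapositive reduction, together with the observation that $\ZZ(M)=\ZZ(M_1)=\ZZ(M_2)$ and the cancellation $t\alpha+t(-\alpha)=0$ to kill one of the two conic coefficients, is exactly the natural elementary proof; nothing is missing.

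As for comparison with the paper: there is nothing to compare. The paper does not prove this lemma at all---it simply cites \cite{JM} for the proof (see the sentence introducing Lemmas~\ref{l.o-}--\ref{l.oln}). Your self-contained argument is therefore a strict addition rather than an alternative, and it is the standard one.
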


\begin{lemma}
Any set of linearly independent vectors is saturated.\label{l.ln}
\end{lemma}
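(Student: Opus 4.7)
The proof plan is short: the statement reduces to the uniqueness of coefficients in a linearly independent expansion, and I would exploit this directly. Fix a linearly independent set $\{v_1,\dots,v_m\}$ in $\QQ^n$. The inclusion $\ZZ_+(v_1,\dots,v_m) \subseteq \ZZ(v_1,\dots,v_m) \cap \QQ_+(v_1,\dots,v_m)$ is immediate from the definitions, so only the reverse inclusion requires argument. Take an arbitrary element $v$ of $\ZZ(v_1,\dots,v_m) \cap \QQ_+(v_1,\dots,v_m)$. By the definition of the lattice there exist integers $z_1,\dots,z_m$ with $v = z_1 v_1 + \dots + z_m v_m$, and by the definition of the cone there exist non-negative rationals $q_1,\dots,q_m$ with $v = q_1 v_1 + \dots + q_m v_m$. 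Since $v_1,\dots,v_m$ are linearly independent over $\QQ$, any $\QQ$-linear expression of $v$ in the $v_i$'s has unique coefficients, so $z_i = q_i$ for every $i$. Each coefficient is therefore simultaneously an integer and a non-negative rational, hence lies in $\ZZ_+$, which exhibits $v$ as an element of $\ZZ_+(v_1,\dots,v_m)$ and completes the proof.

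There is essentially no obstacle: no case analysis or auxiliary reduction is needed, Lemma~\ref{l.o-} plays no role, and the specific geometry of the ambient space is irrelevant beyond the linear independence hypothesis. The only point that deserves being stated carefully is that the uniqueness of coefficients is being invoked over $\QQ$, which is exactly what allows the identification of an integer representation with a rational non-negative one. I would present the argument as a single short paragraph in the final writeup.
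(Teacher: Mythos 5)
Your argument is correct and is the standard one: linear independence forces the integer coefficients and the non-negative rational coefficients of $v$ to coincide, so each coefficient lies in $\ZZ_+$. The paper itself does not prove this lemma but defers to~\cite{JM}; your one-paragraph argument is exactly the expected proof and fills that reference in completely.
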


\begin{lemma}
Let $v=q_1v_1+\dots+q_mv_m$, where $v, v_i$ are arbitrary vectors, and
$q_i\in\mathbb Q_+$. \label{l.oln} Then one can choose a linearly
independent subset
$\{v_{i_1},\dots,v_{i_s}\}\subset\{v_1,\dots,v_m\}$ and numbers
$q_{i_1}',\dots,q_{i_s}'\in\QQ_+$ such that
$$
v=q_{i_1}'v_{i_1}+\dots+q_{i_s}'v_{i_s}.
$$
\end{lemma}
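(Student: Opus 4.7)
The plan is to argue by induction on $m$ via a classical conical Caratheodory-type reduction: whenever the vectors $\{v_1, \dots, v_m\}$ are linearly dependent, I will decrease the number of them appearing with positive coefficient while keeping all coefficients in $\QQ_+$, and iterate until a linearly independent subset remains.

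If $v_1, \dots, v_m$ are already linearly independent, the given presentation has the desired form and there is nothing to prove. Otherwise, fix a nontrivial rational relation $\sum_{i=1}^m \alpha_i v_i = 0$; by negating it if needed, I may assume that at least one $\alpha_i$ is strictly positive. For any $t \in \QQ$ one may then rewrite
\[
v \;=\; \sum_{i=1}^m (q_i - t\alpha_i)\, v_i.
\]
The pivotal step is to choose
\[
t \;=\; \min\{\, q_i/\alpha_i \,:\, \alpha_i > 0 \,\},
\]
which is a nonnegative rational number: the minimum is taken over a nonempty set thanks to the sign-flip, and it is $\geq 0$ because each $q_i \geq 0$. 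For indices $i$ with $\alpha_i > 0$ the new coefficient $q_i - t\alpha_i$ is $\geq 0$ by construction, with equality attained at some index $i_0$; for indices with $\alpha_i \leq 0$ one has $q_i - t\alpha_i \geq q_i \geq 0$ since $t \geq 0$. Dropping $v_{i_0}$ exhibits $v$ as a $\QQ_+$-combination of at most $m - 1$ vectors from $\{v_1, \dots, v_m\}$, and the inductive hypothesis finishes the argument.

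No serious difficulty arises: the procedure is purely algebraic and terminates because the support of the representation strictly shrinks at each step. The one minor point worth checking is that a nontrivial dependence with \emph{rational} coefficients exists among linearly dependent $v_i$, which is automatic in the setting of interest since the $v_i$ lie in the rational character lattice and Gaussian elimination over $\QQ$ produces such a witness.
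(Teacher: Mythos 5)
Your proof is correct: the conical Carath\'eodory reduction (use a rational dependence relation to drive one coefficient to zero while keeping the rest in $\QQ_+$, then induct on the number of vectors) is sound, and you rightly flag that rationality of the dependence relation is automatic since all vectors live in $\mathfrak X(T)\otimes_{\ZZ}\QQ$. The paper does not prove this lemma itself but defers to the reference [JM]; your argument is the standard one and there is nothing to compare beyond that.
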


\subsection{The tautological representation.}
\label{tavt} The highest weight of the tautological representation
equals $e_1$. One should prove that each subset in
$\{e_1,\dots,e_n\}$ is saturated. Using Lemma~\ref{l.ln}, we obtain
that any proper subset of this set is saturated. It is easy to see
that this set itself is also saturated because if one takes a
$\ZZ$-combination of vectors $e_i$, then it becomes a
$\ZZ_+$-combination after adding $(*)$ with a positive coefficient.

\subsection{The adjoint representation.}\label{ad}
Its highest weight $\lambda$ is equal to $e_1-e_n$. Acting by
$W=S_n$, we get all vectors of the form $e_{i}-e_{j}$. Taking the
convex hull adds only $\bar 0$ to this set. We yield
$M(\lambda)=\{0\}\cup\{e_i-e_j\mid 1\le i,j \le n\}$. The saturation
property for this set was proved, e.g., in~\cite[Thm.1]{JM};
nevertheless, we present another proof based on a graph theory
approach.

\smallskip
\begin{theo}
Any subset in $M(\lambda)=\{0\}\cup\{e_i-e_j\mid 1\le i,j \le n\}$
is saturated.
\end{theo}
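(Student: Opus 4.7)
The plan is to use the standard identification of the nonzero weights in $M(\lambda)$ with directed edges of the complete graph on the vertex set $\{1,\ldots,n\}$: the root $e_i - e_j$ is the edge $j\to i$. A subset $S \subseteq M(\lambda)$ then corresponds to a directed graph $D$ on this vertex set; the weight $0$, if present in $S$, may be discarded at the outset since it contributes nothing to any of $\ZZ_+(S)$, $\ZZ(S)$, or $\QQ_+(S)$. The goal is to show that an arbitrary $v \in \ZZ(S) \cap \QQ_+(S)$ lies in $\ZZ_+(S)$.

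First I would apply Lemma~\ref{l.oln} to replace the given nonnegative rational expression of $v$ by one supported on a linearly independent subset $F\subseteq S$, writing $v = \sum_{(i,j)\in F} q_{ij}(e_i-e_j)$ with $q_{ij}\in\QQ_+$. The key structural observation is that a set of roots $\{e_i-e_j\}$ is linearly independent if and only if the underlying undirected edges form a forest: any cycle in the underlying graph produces a linear relation among the edge vectors by walking around it with appropriate signs, while on a forest no such relation can exist. Thus $F$ may be regarded as a forest in $D$.

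The next step is the elementary fact that for any tree $T$ on a vertex set $V_T\subseteq\{1,\ldots,n\}$, the edge vectors $\{e_i-e_j : (i,j)\in E(T)\}$ form a $\ZZ$-basis of the sublattice $\ZZ(T):=\{x\in\ZZ^n : \Supp(x)\subseteq V_T,\ \sum_{k\in V_T}x_k=0\}$. I would verify this by induction on $|V_T|$, stripping off a leaf: the coordinate of $x$ at the leaf determines the integer coefficient of the unique incident edge, after which the remainder lies on the smaller tree. Decomposing $F$ into its tree components $T_1,\ldots,T_r$ then gives $\ZZ(F)=\bigoplus_s\ZZ(T_s)$, and in particular $\QQ(F)\cap\ZZ^n=\ZZ(F)$.

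Combining these two steps: from Lemma~\ref{l.oln} we have $v\in\QQ(F)$, while by hypothesis $v\in\ZZ(S)\subseteq\ZZ^n$, so $v\in\ZZ(F)$. The $\ZZ$-basis property forces the unique $\QQ$-expansion $\sum q_{ij}(e_i-e_j)$ to have integer coefficients, and these coefficients are nonnegative by construction; hence $v\in\ZZ_+(F)\subseteq\ZZ_+(S)$. The only nontrivial technical ingredient is the $\ZZ$-basis property for tree edge vectors (equivalently, unimodularity of the reduced vertex-edge incidence matrix of a tree), and this is where I would spend care; the rest reduces to a direct application of Lemma~\ref{l.oln} and the forest/cycle dichotomy.
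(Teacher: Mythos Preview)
Your proof is correct, and it follows a genuinely different line from the paper's. Both arguments pass through the directed-graph interpretation of roots, but from there they diverge. You invoke Lemma~\ref{l.oln} to reduce to a linearly independent set $F$ of roots, observe that linear independence forces the underlying undirected graph to be a forest, and then use the unimodularity of tree incidence matrices (your leaf-stripping argument) to conclude that the unique $\QQ$-expansion on $F$ already has integer coefficients. The paper instead works directly with an arbitrary $\QQ_+$-combination: it scales $v$ by a common denominator $m$ to obtain a $\ZZ_+$-combination for $mv$, then argues that the edges whose coefficients are not divisible by $m$ form a subgraph with no terminal vertices, hence containing a cycle, and iteratively adjusts coefficients around cycles until all are divisible by~$m$.

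Your route is shorter and more structural, essentially packaging the total unimodularity of the $A_{n-1}$ root system; it also makes transparent why the relation $(*)$ never interferes here, since the roots $e_i-e_j=\varepsilon_i-\varepsilon_j$ live honestly in $\ZZ^n$. The paper's route is more self-contained (it does not appeal to Lemma~\ref{l.oln}) and more algorithmic, explicitly exhibiting how to transform one combination into another via cycle moves.
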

\begin{proof} Fix a nonempty subset $S\subseteq M(\lambda)$.
Construct a directed graph $\Gamma$ according to $S$. Let $\Gamma$
have $n$ vertices $A_1,A_2,\ldots,A_n$, where $A_i$ corresponds to
$e_i$. An arc $A_iA_j$ exists for each element $e_i-e_j$ from $S$.
Taken a formal linear combination $q_1v_1+q_2v_2+\ldots+q_sv_s$,
where $q_i\in\QQ$ and $v_i$ are the arcs of $\Gamma$, one can get a
corresponding linear combination $p_1e_1+p_2e_2+\ldots+p_ne_n$ by
substituting instead of all $v_i$ the corresponding vectors
$e_j-e_k$. If the coefficients $q_i$ are given, one can write the
explicit formula for $p_i$. Namely,
$$
p_i=\sum_{\textstyle{v_l=A_iA_m \atop \text{is an arc of
}\Gamma}}q_l-\sum_{\textstyle{v_l=A_mA_i \atop \text{is an arc of
}\Gamma}}q_l.\eqno(1)
$$
Our aim is to prove that $S$ is saturated. Reformulate the
saturation property in terms of graphs. We have to show that if
$v_1, v_2,\ldots,v_r$ is the set of edges of $\Gamma$, then each
vector $v$ from
$\QQ_+(v_1,v_2,\ldots,v_r)\cap\ZZ(v_1,v_2,\ldots,v_r)$ is also an
element of $\ZZ_+(v_1,v_2,\ldots,v_r)$. Take a vector
$v\in\QQ_+(v_1,v_2,\ldots,v_r)\cap\ZZ(v_1,v_2,\ldots,v_r)$. We can
multiply all its coordinates by an integer $m$ in such a way that
all coefficients of the corresponding $\QQ_+$-combination become
integer, i.e. $v_1=mv\in\ZZ_+(v_1,v_2,\ldots,v_r)\cap
m\ZZ(v_1,v_2,\ldots,v_r)$, where $m\ZZ(v_1,v_2,\ldots,v_r)$ denote
the set of linear combinations of $v_i$ where all coefficients are
divisible by $m$. Since $v_1\in m\ZZ(v_1,v_2,\ldots,v_r)$, we can
apply $(1)$ for the corresponding $m\ZZ$-combination and obtain that
all its coordinates are divisible by $m$. Now consider the
corresponding $\ZZ_+$-combination for $v_1$. If all vectors from
$\Gamma$ enter with multiplicities $\kratno m$, then we are done: we
can divide them by $m$ and get the required combination for $v$.
Otherwise, take all $v_i$ entering into the $\ZZ_+$-combination for
$v_1$ with multiplicities $\not\kratno m$ and name them
\textit{bad}. Let $\Gamma'$ be the subgraph in $\Gamma$ formed by
all bad edges. If we count the sum
$$
p_i=\sum_{\textstyle{v_l=A_iA_m \atop \text{is an arc of
}\Gamma'}}q_l-\sum_{\textstyle{v_l=A_mA_i \atop \text{is an arc of
}\Gamma'}}q_l\eqno(2)
$$
at a vertex $A_i$, then it will be divisible by $m$ since we
excluded only the edges with multiplicities $\kratno m$. This means
that $\Gamma'$ cannot have terminal vertices. Now find a cycle in
$\Gamma'$ (not necessarily oriented!). At each step, we will
increase by $1$ the values at the edges of this cycle oriented
clockwise and decrease by $1$ at the edges oriented
counter-clockwise. This operation does not change $v$ and $v_1$. In
several steps (not more than $m$) the value at an edge of $\Gamma'$
will be divisible by $m$. So the number of bad edges has decreased.
Repeating this operation, we will change the values at the edges in
such a way that they all will become divisible by $m$, and the
vector $v_1$ (and, also, $v$) will not change. Dividing all the
coefficients of the constructed $\ZZ_+$-combination by $m$, we get
the required $\ZZ_+$-combination for $v$.

\end{proof}

\begin{definition}
We will mean by \textit{NSS} a non-saturated subset
$\{v_1,v_2,\ldots,v_s\}$ in the set $M$ of weights of a
representation. By  \textit{ENSS} we will mean the NSS together with
a vector $v$, where $v\in\ZZ (v_1, v_2, \dots, v_m) \cap \QQ_+ (v_1,
v_2, \dots, v_m)$, and $v\not\in\ZZ_+ (v_1, v_2, \dots, v_m)$.
\end{definition}

\subsection{The representation of $SL(2)$ with the highest weight $3\pi_1$}
\label{3pi} One has to verify the saturation property for each
subset in the set $M=\left\{\left(\frac32,
-\frac32\right),\left(\frac12, -\frac12\right),\left (-\frac12,
\frac12\right),\left(-\frac32, \frac32\right)\right\}$ (in the usual
basis). If there exists an NSS), then Lemma~\ref{l.o-} is
applicable, and this NSS can be reduced either to
$\left\{\left(\frac32, -\frac32\right),\left (\frac12,
-\frac12\right)\right\}$ or to $\left\{\left(\frac32,
-\frac32\right),\left(-\frac12, \frac12\right)\right\}$ (up to sign
change). But these subsets are both saturated. This means that the
initial NSS is also saturated, a contradiction.

\subsection{The representation of $SL(2)$ with the highest weight $4\pi_1$}
\label{4pi}Apply Lemma~\ref{l.o-} to the set
$M(\lambda)=\{(2,-2),(1,-1),(0,0),(-1,1),(-2,2)\}$. If an NSS
exists, then it contains not more than one vector of
$(1,-1),(-1,1)$, and not more than one of $(2,-2),(-2,2)$. So, up to
sign change, the NSS coincides with the set $\{(2,-2),(1,-1)\}$ or
with the set $\{(2,-2),(-1,1)\}$. But they are both saturated. We
get a contradiction.

\subsection{The representations of $SL(4)$, $SL(5)$ and $SL(6)$ with the highest weight $\pi_2$}
The set of weights $M(\lambda)$ is equal to $\{e_i+e_j\mid 1\le
i,j\le n, i\ne j\}$, where $n=4$, $5$, $6$. A graph $\Gamma$ with
$n$ vertices can be associated with any subset $S\subseteq
M(\lambda)$: an edge connecting $i$th and $j$th vertices exists
whenever $e_i+e_j\in S$. Suppose that there exists an ENSS
$\{w;v_1,\dots, v_m\,\mid \,v_i\in M(\lambda)\}$ :
\begin{gather*}
w=z_1v_1+\dots+z_mv_m=q_1v_1+\dots+q_mv_m,\quad z_i\in\ZZ,\,
q_i\in\QQ_+, \\
w\not\in\mathbb Z_+(v_1,\dots, v_m).
\end{gather*}
Consider all $v_i$ occurring into the right hand of this equality
with a nonzero coefficient $q_i$. By Lemma~\ref{l.oln}, we may
assume that they are linearly independent. To simplify the
reasoning, consider vector $v=w-\lfloor q_1\rfloor v_1-\dots-\lfloor
q_m\rfloor v_m$ instead of $w$. It is easy to see that $v$ belongs
to $\mathbb Z(v_1,\dots, v_m)$, to $\mathbb Q_+(v_1,\dots, v_m)$ and
does not belong to $\mathbb Z_+(v_1,\dots, v_m)$. We yield that
$\{v; v_1,\dots, v_m\}$ is also an ENSS. After this change all the
coefficients of the $\mathbb Q_+$-combination belong to $[0,1)$.

Construct a subgraph $\Gamma'\subset\Gamma$: take all the vertices
of $\Gamma$ and all the edges of $\Gamma$ entering into the $\mathbb
Q_+$-combination above with nonzero coefficients. Write the
coefficients of the $\QQ_+$-combination at the edges of $\Gamma'$.
The further proof consists of a search of all possible graphs
$\Gamma'$. The following observations will simplify the search.

(0.1) The number of edges in each connected component of $\Gamma'$
is not greater than the number of vertices (if not, the vectors
corresponding to the edges of this component will be linearly
dependent).

(0.2) The number of edges in $\Gamma'$ is less than the number of
vertices (it follows from $(*)$ that the dimension of the enveloping
space equals $n-1$).

(0.3) Graph $\Gamma'$ does not contain even cycles. It follows from
the fact that the edges of an even cycle are linearly dependent:
their alternating sum is null.

(0.4) It follows from~(0.1) and~(0.3) that each connected component
of~$\Gamma'$ either is a tree or contains exactly one cycle. In the
second case this cycle is always odd.

(0.5) It follows from~(0.2) that $\Gamma'$ has a vertex of degree
$0$ or $1$.

At each vertex, count the sum of all coefficients on the incident
edges, then for each sum take its fractional part. All these
fractional parts are equal due to the fact that all the sums in
vertices (they equal the coordinates of~$v$) become integer after
subtracting~$(*)$ with a proper coefficient. Now we conclude that

(0.6) $\Gamma'$ does not contain vertices of degree $0$ and $1$
simultaneously: if it does, the fractional parts of the sums in
vertices are all equal to $0$, but in the terminal vertex this sum
has only one summand and is not an integer.

We consider these two cases independently.
\smallskip

{\it Case 1.} Graph $\Gamma'$ has a vertex of degree $0$.

(1.1) Any other connected component of this graph is either a point
or has no terminal vertices (it follows from (0.6)). Moreover, it
follows from (0.4) that it is an odd cycle.

(1.2) We have $n\le 6$, consequently, the number of edges in
$\Gamma'$ is $\le 5$, but any odd cycle has $\ge 3$ edges, and we
yield that $\Gamma'$ has at most $1$ cycle.

Fulfill an exhaustive search within all graphs $\Gamma'$ having a
vertex of degree~$0$.
\begin{gather*}
n=4,\quad\text{graph is a cycle of length}\; 3,\\
n=5,\quad\text{graph is a cycle of length}\; 3,\\
n=6,\quad\text{graph is a cycle of length}\; 3,\\
n=6,\quad\text{graph is a cycle of length}\; 5.
\end{gather*}
The only possible~$\mathbb Q_+$-combination in these cases is
$\frac12(v_1+\dots+v_s)$. This means that $v$ is a sum of~$e_i$
corresponding to the vertices of the cycle. But it does not lie
in~$\ZZ(v_1, \dots, v_m)$ when $n$ is even. When $n=5$, consider
also the graph $\Gamma$. Since~$v$ is a $\ZZ$-combination of the
edges of~$\Gamma$, $\Gamma$ has more than $3$ edges:
$\Gamma\supset\Gamma'$, $\Gamma\neq\Gamma'$ and $\Gamma'$ has $3$
edges. In the representation above the sum of coefficients of~$v$ is
odd, hence we must apply~$(*)$ to the existing $\ZZ$-combination to
get the same representation. For this purpose the edges from
$\Gamma\setminus\Gamma'$ should touch all the vertices of $\Gamma$
(we name this property $(**)$).

In fig. 1 the graph $\Gamma'$ is drawn. To satisfy $(**)$, $\Gamma$
must contain at least the following edges (up to symmetry): see
fig.~2, 3 or 4. The vertices corresponding to $e_i$ are called
$V_i$. But in all cases we get a contradiction since $e_1+e_2+e_3$
is already a $\mathbb Z_+$-combination:
\smallskip
\begin{center}
\begin{tabular}{ccl}
in Fig. 2&& $e_1+e_2+e_3=V_1V_2+V_2V_3+V_1V_3+V_4V_5,$\\
in Fig. 3&& $e_1+e_2+e_3=V_4V_2+V_2V_1+V_1V_3+V_3V_5,$\\
in Fig. 4&& $e_1+e_2+e_3=V_4V_2+V_2V_5+2V_1V_3$.
\end{tabular}
\end{center}
\smallskip

\begin{center}
\begin{tabular}{cccc}
\epsfig{figure=pictures.0} & \epsfig{figure=pictures.1} & \epsfig{figure=pictures.2} & \epsfig{figure=pictures.3}\\
Fig. 1&Fig. 2&Fig. 3&Fig. 4\\
&&&
\end{tabular}
\end{center}

We have shown that the counterexample does not exist in the case
when $\Gamma'$ has a vertex of degree $0$.

\smallskip
{\it Case 2.} Graph $\Gamma'$ has a vertex $X$ of degree $1$. Let
$XY$ be an edge incident to $X$. We need to subtract $(*)$ with the
same multiplicity as is at $XY$. We yield

(2.1) since $X$ is a terminal vertex of $\Gamma'$, either $XY$ is a
connected component of $\Gamma'$ or the vertex degree of $Y$ is $\ge
3$. Indeed, suppose that the vertex degree of $Y$ is $2$. Let $YZ$
be the second edge incident to $Y$, and let $q$ be the value written
at $YZ$. Then after subtracting $(*)$ the coefficient at $Y$ becomes
equal to $q$, but it must be integer, and we know that $q\in (0,
1)$. This is a contradiction.

Find all possible connected components of $\Gamma'$.

\begin{tabular}{clcl}
On $2$ vertices: & \epsfig{figure=smallpict.0}& &\\
On $3$ vertices: & \epsfig{figure=smallpict.1}&&\\
On $4$ vertices: & \epsfig{figure=smallpict.2}& and
&\epsfig{figure=smallpict.3}
\end{tabular}

Notice that if we have a connected component of $\Gamma'$ on $5$ or
$6$ vertices, it is the only connected component of $\Gamma'$. Using
this and (0.2), we obtain that $\Gamma'$ is a tree. Apply (2.1). We
have to consider only the following trees:

\begin{tabular}{clcl}
On 5 vertices: & \epsfig{figure=smallpict.4}&&\\
On 6 vertices: & \epsfig{figure=smallpict.5}& and
&\epsfig{figure=smallpict.6}
\end{tabular}

But the edges of \epsfig{figure=smallpict.7} are linearly dependent
(when $n=6$, one should sum all the thin edges, then subtract the
thick one, and obtain $(*)$). Therefore, this graph should not be
considered.

Fulfill an exhaustive search within all graphs $\Gamma'$ on $n$
vertices satisfying all the conditions above. In the case when one
of the connected components of $\Gamma'$ is a claw (i.e., all the
edges are incident to one vertex), its central vertex will
correspond to $e_1$ (it is easy to see that $\Gamma'$ cannot have
more than one claw).

\smallskip
\begin{center}
\begin{tabular}{|c|c|c|}
\hline
$n$&Splitting into& Permissible \\
&connected components&graphs\\
\hline
$4$&$2+2$&\epsfig{figure=smallpict.8}\\
\hline
$4$&$4$&\epsfig{figure=smallpict.3} or \epsfig{figure=smallpict.2}\\
\hline
$5$&$2+3$&\epsfig{figure=smallpict.10} \epsfig{figure=smallpict.1}\\
\hline
$5$&$5$&\epsfig{figure=smallpict.4}\\
\hline
$6$&$2+2+2$&\epsfig{figure=smallpict.10} \epsfig{figure=smallpict.10} \epsfig{figure=smallpict.10}\\
\hline
$6$&$2+4$&\epsfig{figure=smallpict.10} \epsfig{figure=smallpict.3} or \epsfig{figure=smallpict.10} \epsfig{figure=smallpict.2}\\
\hline
$6$&$6$&\epsfig{figure=smallpict.5}\\
\hline
\end{tabular}
\end{center}
\smallskip

The graphs \epsfig{figure=smallpict.8} and
\epsfig{figure=smallpict.3} do not satisfy our conditions: their
edges are linearly dependent.

If we start with \epsfig{figure=smallpict.2}, we can obtain only
$e_1$ as the $\QQ_+$-combination: all the three edges must appear in
the $\QQ_+$-combination with the same coefficient, let $a$,
$a\in(0,1)$. We sum these three vectors, obtain
$3ae_1+ae_2+ae_3+ae_4$, and subtract $(*)$ with a necessary
coefficient. Finally we obtain $2ae_1$. In this notation it already
has integer coordinates (equal to zero), this means that all the
other coordinates, $2a$ among them, must be integers, $a=\frac12$,
$v=e_1$. But $v$ cannot be obtained as the $\ZZ$-combination of the
vectors of the type $e_i+e_j$. Indeed, each $v_i$ has an even sum of
coordinates, $n$ is even, subtracting $(*)$ with an integer
coefficient does not change parity of the sum of coordinates, this
proves that any vector from $\ZZ_+\{v_i\}_{i=1}^m$ has an even sum
of coordinates.

The edges of graph \epsfig{figure=smallpict.10}
\epsfig{figure=smallpict.1}
 are linearly dependent (here $n=5$) because ($2\cdot$first edge + the sum of the edges of the cycle) $=0$.

Graph \epsfig{figure=smallpict.4}: using similar reasoning, $v=e_1$
or $2e_1$. But there exists an edge $\in\Gamma\setminus\Gamma'$,
hence $e_1$ is a $\mathbb Z_+$-combination of the edges of $\Gamma$:
take the sum of thick edges of \epsfig{figure=smallpict.9}.

The edges of \epsfig{figure=smallpict.10}
\epsfig{figure=smallpict.10} \epsfig{figure=smallpict.10} and
\epsfig{figure=smallpict.10} \epsfig{figure=smallpict.3}
 are linearly dependent.

In graph \epsfig{figure=smallpict.10} \epsfig{figure=smallpict.2}
 $v$ may be equal only to $e_1$, but $e_1$ can not be obtained as a
  $\mathbb Z$-combination: $6$ is even, the sum of coordinates of
   $e_1$ is odd.

In graph \epsfig{figure=smallpict.5} vector $v$ has to be
proportional to $e_1$, moreover, the coefficient must be even (we
use the reasoning as above, from the fact that $6$ is even it
follows that the sum of coordinates is even for any vector from
$\ZZ(v_1, \dots, v_m)$). But if we add any edge to this set, $2e_1$
will be obtained as a $\mathbb Z_+$-combination: take the sum of
thick edges of \epsfig{figure=smallpict.11}.

All the cases are considered, this completes the proof.

\subsection{The representation of $SL(3)$ with the highest weight $2\pi_1$}\label{224}
Its highest weight $\lambda$ is equal to $2\pi_1=2e_1$, and all the
weights of this representation are pointed in the figure below.

\begin{center}
\epsfig{figure=pictures.4}
\end{center}

Assume that this set contains an NSS $\{v_1, \dots, v_m\}$ (and ENSS
$\{v;v_1, \dots, v_m\}$). Consider the following possibilities.

If this NSS contains both $-e_1$ and~$-e_2$, then it does not
contain $-e_3$. Indeed, if it does, then $\ZZ_+(-e_1, -e_2,
-e_3)=\ZZ(-e_1, -e_2, -e_3)$, and this NSS cannot be non-saturated.
Using the similar reasoning, we get that it contains at most one of
the vectors $2e_1$ and $2e_2$: otherwise $\ZZ_+(-e_1, -e_2, 2e_1,
2e_2)=\ZZ(-e_1, -e_2, -e_3)$, and this NSS also cannot be
non-saturated. Consequently, this NSS coincides (up to the indices
renumbering) either with $(-e_1,-e_2,2e_1)$ or with
$(-e_1,-e_2,2e_1, 2e_3)$. But these subsets are saturated.

If the NSS has no vectors of form $-e_i$, then $\{v_1, \dots,
v_m\}\subset\{2e_1, 2e_2, 2e_3\}$. But in this case the NSS is also
saturated.

If the NSS has exactly one vector of form $-e_i$, suppose $-e_1$,
then fix a representation $n_1e_1+n_2e_2+n_3e_3$ for $v$,
$n_1,n_2,n_3\in\ZZ$. Then $n_2$ and $n_3$ have the same parity,
since $e_2$ and $e_3$ occur in the corresponding $\ZZ$-combination
for $v$ only as $2e_2$ and $2e_3$. Apply Lemma~\ref{l.oln}. We get a
representation $v=q_1v_1+q_2v_2$, where $v_1$ and $v_2$ are linearly
independent. We may suppose $q_i\in[0,1)$ because otherwise $v$ can
be changed to a vector $v-v_i$, which will diminish (in some sense)
the NSS. So the required combination will be either $\frac{2e_1}2$
or $\frac{2e_2+2e_3}2$. In the first case, recall that the NSS
contains $-e_1$, and we yield $e_1$ as the $\ZZ_+$-combination
$2e_1-e_1$. In the second case $-e_1$ is already the
$\ZZ$-combination, this is also a contradiction.

\subsection{The representation of $SL(6)$ with the highest weight $\pi_3$}
The highest weight $\lambda$ equals $e_1+e_2+e_3$,
$M(\lambda)=\{e_i+e_j+e_k\mid 1\le i<j<k\le 6\}$.

\begin{lemma}
In the notation above, for any $5$ linearly independent vectors
$v_1, \dots, v_5\in M(\lambda)$ the following equality holds:
$$
\ZZ(v_1, \dots, v_5)=\ZZ_{\equiv 0(3)}(e_1, \dots, e_6).\label{utv0}
$$
\end{lemma}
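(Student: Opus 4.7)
The inclusion $\ZZ(v_1,\dots,v_5)\subseteq\ZZ_{\equiv 0(3)}(e_1,\dots,e_6)$ is immediate: on the lattice $\Lambda:=\ZZ(e_1,\dots,e_6)$ the coefficient sum $\sum x_i$ is well-defined modulo $6$ (hence modulo $3$) thanks to the relation $e_1+\dots+e_6=0$, and every $v_i=e_a+e_b+e_c$ has coefficient sum $3\equiv 0\pmod 3$. Both sublattices have rank $5$ inside the rank-$5$ lattice $\Lambda$, and $[\Lambda:\ZZ_{\equiv 0(3)}]=3$, so equality will follow once I show $[\Lambda:\ZZ(v_1,\dots,v_5)]=3$. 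This index equals $|\det(v_1,\dots,v_5)|$ computed in any $\ZZ$-basis of $\Lambda$.

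Working in the basis $e_1,\dots,e_5$ of $\Lambda$ (with $e_6=-e_1-\dots-e_5$), each $v_i$ becomes, up to sign, the characteristic vector $\chi_{B_i}\in\{0,1\}^5$ of a $3$-subset $B_i\subseteq\{1,\dots,5\}$: if $6\notin A_i$ then $v_i=\chi_{A_i}$, while if $A_i=\{p,q,6\}$ then $v_i=-\chi_{\{1,\dots,5\}\setminus\{p,q\}}$. Put $C_i:=\{1,\dots,5\}\setminus B_i$, a $2$-subset, and view $C_1,\dots,C_5$ as the edges of a multigraph $G$ on the vertex set $\{1,\dots,5\}$; then $\chi_{B_i}=\mathbf{1}-\chi_{C_i}$. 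If $\sum c_i\chi_{C_i}=0$, summing the five coordinates gives $2\sum c_i=0$, hence $\sum c_i=0$, and therefore $\sum c_i\chi_{B_i}=(\sum c_i)\mathbf{1}-\sum c_i\chi_{C_i}=0$; the converse uses $3\sum c_i=0$ in the same way. So the $\chi_{B_i}$ are linearly independent iff the $\chi_{C_i}$ are, which by the standard rank formula for the unsigned incidence matrix forces every component of $G$ to contain an odd cycle. With only $5$ vertices and $5$ edges this means $G$ is connected and unicyclic with odd cycle (a triangle or a pentagon), and the usual leaf-peeling argument (expanding along the row of a leaf vertex and iterating until only the cycle remains) yields $|D|=2$, where $D:=\det(\chi_{C_1},\dots,\chi_{C_5})$.

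It remains to relate the two determinants. Expanding $\det(\chi_{B_1},\dots,\chi_{B_5})=\det(\mathbf{1}-\chi_{C_1},\dots,\mathbf{1}-\chi_{C_5})$ multilinearly in the columns, all terms with at least two columns equal to $\mathbf{1}$ vanish, and one is left with $-D+\sum_{j=1}^{5}D_j$, where $D_j$ denotes the determinant obtained by replacing the $j$-th column by $\mathbf{1}$. Writing $\mathbf{1}=\sum_j\alpha_j\chi_{C_j}$ (possible because the $\chi_{C_i}$ form a $\QQ$-basis of $\QQ^5$), Cramer's rule gives $D_j=\alpha_j D$, and comparing coordinate sums yields $5=2\sum_j\alpha_j$, so $\sum_j\alpha_j=5/2$. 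Consequently $\det(\chi_{B_1},\dots,\chi_{B_5})=D(-1+5/2)=\pm 2\cdot 3/2=\pm 3$, which gives the required index and finishes the proof. The main technical step is this final determinant manipulation; the rest is a sequence of reductions to a classical fact about unsigned incidence matrices.
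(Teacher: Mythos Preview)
Your argument is correct and takes a genuinely different route from the paper's. Both proofs eventually reduce to a graph on five vertices with five edges (the paper normalises so that every $v_i$ contains $e_1$ and uses the remaining two indices as an edge on $\{2,\dots,6\}$; you instead eliminate $e_6$ and take complements of the resulting $3$-subsets of $\{1,\dots,5\}$), and both deduce from linear independence that this graph is connected with a unique odd cycle. From that point, however, the arguments diverge. The paper proceeds constructively: it shows that any two vertices are joined by an odd walk, takes the alternating sum along such a walk to realise each $e_1+e_i+e_j$ inside $\ZZ(v_1,\dots,v_5)$, and then obtains all of $\ZZ_{\equiv 0(3)}(e_1,\dots,e_6)$ by subtracting to get root vectors $e_i-e_j$. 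You instead compute the index directly: knowing $|\det(\chi_{C_i})|=2$ for the odd-unicyclic incidence matrix, your multilinear expansion together with Cramer's rule and the coordinate-sum identity $\sum\alpha_j=5/2$ gives $|\det(\chi_{B_i})|=\tfrac32\cdot 2=3$, hence $[\Lambda:\ZZ(v_1,\dots,v_5)]=3=[\Lambda:\ZZ_{\equiv 0(3)}]$. Your approach is shorter and more algebraic, and it explains structurally \emph{why} the index is exactly $3$; the paper's approach has the advantage of producing explicit $\ZZ$-combinations, which is closer in spirit to how the lemma is used downstream.
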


This means the following. For any vector $\mathbb Z(v_1, \dots,
v_5)$, its sum of coordinates is ${}\kratno 3$, and this property
does not depend on its representation. A surprising fact is that the
reverse statement is true~-- if we take an integer vector with the
sum of coordinates divisible by $3$, then the vector with the same
coordinates in quasi-basis can be obtained as a $\ZZ$-combination of
vectors $v_1, \ldots, v_5$.

\begin{proof}
First, we are going to show that vector $(1,1,1,0,0,0)$ lies in
$\mathbb Z(v_1, \dots, v_5)$. Notice that if we take $-v_i$ instead
of $v_i$ for an index $i$, then the $\ZZ$-lattice will not change.
In quasi-basis it means that we take a vector with the complementary
set of indices. Consequently, we may suppose that each $v_i$
contains $e_1$. Construct a graph with $5$ vertices and $5$ edges:
each vertex corresponds to one of the integers $2$, $3$, $4$, $5$,
$6$, vertices $i$ and $j$ are connected with an edge if and only if
there exists $k$, such that $v_k=e_1+e_i+e_j$. Examine all possible
graphs. Vectors $\{v_i\}_{i=1}^5$ are linearly independent by the
data, this fact has the following conclusions:

(1) the graph has no cycles of length $4$~-- otherwise we have a
zero sum of form $v_1-v_2+v_3-v_4$;

(2) the graph has no vertices of degree $0$~-- otherwise a subgraph
containing other $4$ vertices has $5$ edges, this means that it has
a cycle of length $4$.

(3) this graph is connected. Indeed, if it has $2$ or more connected
components, none of which is a single vertex, then it has two
connected components of $2$ and $3$ vertices respectively, which
gives at most $3+1=4$ edges. This graph has a cycle since it has a
sufficient number of edges. Using~(1), we get that this cycle has
$3$ or $5$ vertices, which means that it is odd.

For any vertex $X$ of this graph there exists an odd cycle (maybe
not a circuit) passing through this vertex. Indeed, if $X$ is
already a vertex of the odd cycle constructed above, then we are
done. Otherwise the required cycle has three parts. The first part
is a path from $X$ to any vertex $Y$ of the odd cycle, the second is
the odd cycle, the third is the reverse path from $Y$ to $X$.

Now we show that any two vertices can be connected by an odd path
(not necessarily simple). Indeed, take two arbitrary vertices and
connect them with an arbitrary path. This path is either odd or
even. If it is odd, we are done. If it is even, we can combine it
with an odd cycle passing through the first vertex of this path.

Now we explain how to obtain $(1,1,1,0,0,0)$. This vector
corresponds to the pair $(2,\,3)$ of vertices of the graph. If they
are already connected by an edge, we are done. Otherwise connect
them with an odd path and take the alternating sum of its edges.

To finish the proof, explain how to obtain an arbitrary vector with
the sum of coor\-di\-na\-tes~${}\kratno 3$. We can obtain
$(0,1,1,1,0,0)$ in the same way as we obtain $(1,1,1,0,0,0)$. Hence
we can easily obtain $(1,0,0,-1,0,0)$ (and all the other vectors
which can be obtained from this by a permutation of coordinates) as
their difference. To get the required decomposition for an arbitrary
vector, we will successively subtract vectors of form $e_i-e_j$ from
our vector. At each step choose $i$ and $j$ such that the $i$th
coordinate of our vector is maximal and the $j$th coordinate is
minimal. If the difference between the maximal and the minimal
coordinates is $\geqslant 2$, then in several steps it will
diminish. If it equals $1$, then the vector is equal to
$(e_i+e_j+e_k)+a(1,1,1,1,1,1)$, $a\in\ZZ$, but this vector equals
$(e_i+e_j+e_k)\in\ZZ(v_1,_2,\ldots,v_5)$. If this difference equals
$0$, then the remaining vector equals $0$. In both cases we are
done.
\end{proof}

Reformulate Lemma~\ref{utv0}:
\begin{lemma}
In the notation above, for any $m$ linearly independent vectors
$\{v_1, \dots,v_m\}\subseteq M(\lambda)$ the following equality
holds:
$$
\mathbb Z(v_1, \dots, v_m) = \QQ(v_1, \dots, v_m)\cap\ZZ_{\equiv
0(3)}(e_1, \dots, e_6).\label{utv1}
$$
\end{lemma}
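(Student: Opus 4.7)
The plan is to reduce the statement to the already-established case $m=5$ from Lemma~\ref{utv0} by extending the given linearly independent set to a maximal one and exploiting uniqueness of the $\QQ$-decomposition.

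First I would dispose of the easy inclusion $\ZZ(v_1,\dots,v_m)\subseteq\QQ(v_1,\dots,v_m)\cap\ZZ_{\equiv 0(3)}(e_1,\dots,e_6)$. Membership in $\QQ(v_1,\dots,v_m)$ is trivial. For the congruence condition, observe that each $v_i=e_{i_1}+e_{i_2}+e_{i_3}$ has sum of quasi-basis coefficients equal to $3$, so any $\ZZ$-combination of the $v_i$ has sum of coefficients divisible by $3$, i.e.\ lies in $\ZZ_{\equiv 0(3)}(e_1,\dots,e_6)$.

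For the reverse inclusion, I would first note that $\QQ(e_1,\dots,e_6)$ is $5$-dimensional because of the relation $(*)$, and that $M(\lambda)$ spans this whole $5$-dimensional space over $\QQ$ (differences such as $(e_1+e_2+e_3)-(e_1+e_2+e_4)=e_3-e_4$ immediately generate the root lattice). Hence any linearly independent set $\{v_1,\dots,v_m\}\subseteq M(\lambda)$ can be enlarged, by the standard Steinitz exchange argument within the matroid defined by $M(\lambda)$, to a linearly independent subset $\{v_1,\dots,v_m,v_{m+1},\dots,v_5\}\subseteq M(\lambda)$ of maximal size $5$.

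Now take any $v\in\QQ(v_1,\dots,v_m)\cap\ZZ_{\equiv 0(3)}(e_1,\dots,e_6)$. By Lemma~\ref{utv0} applied to the basis $\{v_1,\dots,v_5\}$, we have $v\in\ZZ(v_1,\dots,v_5)$, so $v=z_1v_1+\dots+z_5v_5$ with all $z_i\in\ZZ$. On the other hand, $v\in\QQ(v_1,\dots,v_m)$ gives an expression $v=q_1v_1+\dots+q_mv_m+0\cdot v_{m+1}+\dots+0\cdot v_5$ with $q_i\in\QQ$. Since $v_1,\dots,v_5$ are linearly independent, the coefficients in a $\QQ$-combination of them are unique, so $q_i=z_i$ for $i\le m$ and $z_i=0$ for $i>m$. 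In particular the $q_i$ are all integers, which exhibits $v$ as an element of $\ZZ(v_1,\dots,v_m)$.

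There is no real obstacle here; the only point that needs a brief verification is the extendability claim, i.e.\ that $M(\lambda)$ spans the full ambient $5$-dimensional space so the matroid exchange works. Everything else is immediate from Lemma~\ref{utv0} together with uniqueness of coordinates with respect to a linearly independent family.
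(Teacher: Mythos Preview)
Your proof is correct and follows essentially the same route as the paper: extend $\{v_1,\dots,v_m\}$ to a linearly independent set of size $5$ inside $M(\lambda)$, invoke Lemma~\ref{utv0}, and then use uniqueness of coordinates with respect to a basis to conclude. The paper states this more tersely via the identity $\ZZ(v_1,\dots,v_m)=\langle v_1,\dots,v_m\rangle\cap\ZZ(v_1,\dots,v_5)$, but your explicit verification of the easy inclusion and of the spanning property of $M(\lambda)$ only makes the argument more complete.
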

\begin{proof} If $m < 5$, we add several vectors from $M(\lambda)$ to this set to get a
set of $5$ linearly independent vectors $v_1, \dots,v_5$.  Apply
Lemma~\ref{utv0} to the set $v_1, \dots,v_5$. Since a vector has a
unique representation on a basis,
$$
\mathbb Z(v_1, \dots,v_m)=\langle v_1, \dots,v_m\rangle\cap \mathbb
Z(v_1, \dots,v_5).
$$
Finally, using Lemma~\ref{utv0}, we get that $\mathbb Z(v_1,
\dots,v_5)$ coincides with $\ZZ_{\equiv 0(3)}\{e_1, \dots, e_6\}$.
\end{proof}

\begin{remark}
One may suppose that if we omit $(*)$ and take linearly independent
vectors $v_1, \ldots, v_6\in\QQ^6$, $v_i=e_p+e_q+e_r$, then
$\ZZ(v_1, \ldots, v_6)=\ZZ_{\equiv 0(3)}(e_1,\ldots, e_6 )$.
However, this is not true~-- if we take the following vectors, then
the volume of the unit cube will be equal to $6$, and the index of
the new lattice in $\ZZ^6$ will be $6$, not $3$.
$$
\left(
    \begin{array}{c}
    v_1\\
    v_2\\
    v_3\\
    v_4\\
    v_5\\
    v_6\\
    \end{array}
\right) = \left(
    \begin{array}{lr}
    1\;1\;1\;  0\;0  \;0\\
    1\;0\;0\;  1\;1  \;0\\
    0\;1\;0\;  0\;1  \;1\\
    0\;0\;1\;  1\;0  \;1\\
    1\;1\;0\;  0\;0  \;1\\
    1\;1\;0\;  1\;0  \;0\\
    \end{array}
\right),\quad \det\left(
    \begin{array}{c}
    v_1\\
    v_2\\
    v_3\\
    v_4\\
    v_5\\
    v_6\\
    \end{array}
\right) = 6.
$$
\end{remark}
\smallskip

Now we have to show that every subset $\{v_i\}$ in $M(\lambda)$ is
saturated. To the contrary, let $\{v_1, \dots, v_m\}\subset
M(\lambda)$ be an NSS. Then there exists a vector $v$ and (by
Lemma~\ref{l.oln} and after renumbering) a linearly independent
subset $\{v_1, \dots, v_s\}\subseteq\{v_1, \dots, v_m\}$ such that
$$
v=q_1v_1+\dots+q_sv_s=z_1v_1+\dots+z_mv_m,\qquad
q_i\in\QQ_+,\,z_i\in\ZZ.
$$
This gives $v\in\ZZ_{\equiv 0(3)}(e_1,\dots,e_6)$, and $v\in\langle
v_1, \dots, v_s\rangle$. Using Lemma~\ref{utv1}, we obtain that
${v\in\ZZ(v_1, \dots, v_s)}$, $v=z_1'v_1+\dots+z_s'v_s$. Since $v_1,
\dots, v_s$ are linearly independent, for any $i$ we have
$q_i=z_i'\in \ZZ_+$, and $v\in\ZZ_+(v_1, \dots, v_s)$. This shows
that each subset of $M(\lambda)$ is saturated.

\section{Negative results}
Let $\lambda$ be a highest weight not listed in the Main Theorem.
One has to construct an NSS in~$M(\lambda)$. There are two
opportunities for $\lambda$: either the absolute values of all its
usual coordinates are $<1$, or $\lambda$ has a coordinate with the
absolute value $\geqslant 1$. Speaking informally, the second case
is practically always the consequence of the first one
(Lemma~\ref{l.ovl}), but the NSS in the first case is constructed
recursively and its capacity increases when $n$ increases. The
construction of the second case gives an NSS of only $4$ vectors for
any $n$.

To prove that a set $\{v_1, \dots, v_m\}$ is not saturated, we will
construct a so-called {\it discriminating function} $f(v)$ with the
following properties: linearity, $=0$ when $v=e_1+\dots+e_n$ (to
make it correctly defined), and non-negativity on the vectors of the
set $\{v_1, \dots, v_m\}$. The discriminating function will be
applied as follows. If we want to show that $\{v_0;v_1, \dots,
v_m\}$ is an ENSS, it suffices to present the corresponding $\QQ_+$-
and $\ZZ$-combinations for $v_0$ and construct a discriminating
function $f$, such that $f(v_0)$ cannot be composed as the sum of
$f(v_i)$ with {$\ZZ_+$-coefficients}.

Further, $x_i$ denotes the function of taking the $i$th coordinate
of a vector in some quasi-basis representation.

\subsection{The fundamental weights.} In this case $\lambda$ equals
$$\pi_k=\pi_{k,n}=\left(\frac{n-k}{n},
\dots,\frac{n-k}{n}, -\frac{k}{n}, \dots, -\frac{k}{n} \right)
$$
in the usual basis, $0<k<n$, $n \geqslant 3$ (when $n=2$, the
corresponding representation is mentioned in the Main Theorem). In
some proofs we will consider $\pi_k$ for $SL(n)$'s of different
dimensions simultaneously, so the second index in the notation
$\pi_{k,n}$ carries this data. Here $M(\lambda) = \{\sigma
\lambda\mid \sigma \in S_n\}$. The highest weight is equal to
$e_1+e_2+\ldots+e_k$ in quasi-basis, all the points of $M(\lambda)$
have a form $ e_{i_1}+e_{i_2}+\ldots+e_{i_k}$, $1\le i_1<i_2<
\ldots<i_k\le n$.

Now we can reformulate the problem. Let $\{e_i\}$ be the
quasi-basis, $k<n$, the weight $\lambda=\pi_k$ is not listed in the
Main Theorem. One has to find a non-saturated subset in the set
$$
\{e_{i_1}+e_{i_2}+\ldots+e_{i_k} \mid  1\le i_1<i_2< \ldots<i_k\le n\}.
$$

The construction will use induction on $n$. In the next section we
will produce the NSSes which will be the base of the induction.

\subsubsection{Important particular cases.}

Example 1. $n=7, k=2$. The NSS will consist of those and only those
vectors which are the sums of two quasi-basis vectors connected with
an edge in the graph below. We have
\begin{gather*}
v= e_1+e_2+e_3 = \frac{1}{2}\Bigl((e_1+e_2)+(e_2+e_3)+(e_1+e_3)
\Bigr),\\
v=-(e_4+e_5+e_6+e_7)=2(e_2+e_3)-(e_2+e_4)-(e_2+e_5)-(e_3+e_6)-(e_3+e_7).
\end{gather*}
\begin{center}
\epsfig{figure=pictures.5}
\end{center}

\begin{gather*}
\text{Let}\quad f=5(x_2+x_3)-2(x_1+x_4+x_5+x_6+x_7).\quad\text{Then}\\
f(e_2+e_3)=10,\\
f(e_1+e_2)=f(e_1+e_3)=f(e_2+e_4)=f(e_2+e_5)=f(e_3+e_6)=f(e_3+e_7)=3,\\
f(v)=f(e_1+e_2+e_3)=5\cdot 2-2=8.
\end{gather*}
It is clear that $8$ cannot be represented as a sum where each
summand equals either $3$ or $10$.

\smallskip

Example 2. $n=8, k=3$. Consider the following vectors (in
quasi-basis):
$$
\left(
    \begin{array}{c}
    v_1\\
    v_2\\
    v_3\\
    v_4\\
    v_5\\
    \\
    v_6\\
    v_7\\
    v_8
    \end{array}
\right) = \left(
    \begin{array}{lr}
    0\;0\;1\;1\;1 & 0\;0\;0\\
    1\;0\;0\;1\;1 & 0\;0\;0\\
    1\;1\;0\;0\;1 & 0\;0\;0\\
    1\;1\;1\;0\;0 & 0\;0\;0\\
    0\;1\;1\;1\;0 & 0\;0\;0\\
    \\
    0\;0\;1\;1\;0 & 1\;0\;0\\
    0\;1\;0\;1\;0 & 0\;1\;0\\
    0\;1\;1\;0\;0 & 0\;0\;1\\
    \end{array}
\right).
$$
\smallskip
Take
$v=(1,1,1,1,1,0,0,0)=\frac{1}{3}(v_1+v_2+v_3+v_4+v_5)=2v_5-v_6-v_7-v_8$.
\smallskip
Let $f=x_1+5(x_2+x_3+x_4)+2x_5-6(x_6+x_7+x_8)$. Then
\begin{gather*}
f(v_1)=12,\quad f(v_2)=f(v_3)=8,\quad f(v_4)=11,\\
f(v_5)=15,\quad f(v_6)=f(v_7)=f(v_8)=4,\quad f(v)=18.
\end{gather*}
It is easy to see that $18$ cannot be represented as a sum where
each summand equals $4$, $8$, $11$, $12$, or $15$.

\smallskip

Example 3. $n=2k, k\geqslant 4$.
$$
    \left(
    \begin{array}{c}
        v_1\\
        v_2\\
        v_3\\
        \vdots \\
        v_{k-1}\\
        v_k\\
        \\
        v_{k+1}\\
        v_{k+2}\\
    \end{array}
\right) =
 \left(
    \begin{array}{ccccccccccccc}
        1&0&0&\ldots&0&0&                      &0&1&1&\ldots&1&1\\
        0 &1&0&\ldots&0 &0&                    &1&0&1&\ldots&1&1\\
        0 &0&1&\ldots&0 &0&                    &1&1&0&\ldots&1&1\\
        \vdots&\vdots&\vdots&\ddots&\vdots&\vdots&    &\vdots&\vdots&\vdots&\ddots&\vdots&\vdots\\
        0 &0&0&\ldots&1 &0&                    &1&1&1&\ldots&0&1\\
        0 &0&0&\ldots&0 &1&                    &1&1&1&\ldots&1&0\\
        \\
        0 &1&0 &\ldots&0 &0&   &1 &1 &1&\ldots &1 &0\\
        1 &1&0 &\ldots& 0 &0&  & 0 &1 &1&\ldots &1 &0\\
\end{array}
 \right)
$$

\smallskip
Show that it is an NSS. Let
$v=(\underbrace{0,\dots,0}_k,\underbrace{1,\dots,1}_k)$,
$v=v_1+v_{k+1}-v_{k+2}$,
\begin{multline*}
\frac{1}{k-2}(v_1+\dots+v_k)=\frac{1}{k-2}(\underbrace{1,\dots,1}_k,\underbrace{k-1,\dots,k-1}_k)=\\
=\frac{1}{k-2}(\underbrace{0,\dots,0}_k,\underbrace{k-2,\dots,k-2}_k)=
(\underbrace{0,\dots,0}_k,\underbrace{1,\dots,1}_k)=v.
\end{multline*}
To explain why $v$ is not a $\mathbb Z_+$-combination of vectors
$v_i$, consider two cases.

First case, when $k=4$, let $f=-6x_3-7x_4+5(x_5+x_6+x_7)-2x_8$. Then
$$
f(v_1)=f(v_2)=8,\,f(v_3)=2,\,f(v_4)=8,\,f(v_5)=15,\,f(v_6)=10,\,f(v)=13.
$$
But it is easy to see that $13$ cannot be represented as a sum where
each summand equals $2$, $8$, $10$, or $15$.

Second case, when $k\geqslant 5$, let
$f=(k-2)(x_{k+1}+\dots+x_{2k})-k(x_3+\dots+x_k)$. Then
\begin{gather*}
f(v_1)=f(v_2)=(k-2)(k-1),\\
f(v_3)=f(v_4)=\dots=f(v_k)=(k-1)(k-2)-k,\\
f(v_{k+1})=(k-2)(k-1),\\
f(v_{k+2})=(k-2)^2,\\
f(v)=k(k-2).
\end{gather*}
If $k\geqslant 6$, then two least possible summands give too much:
$2((k-1)(k-2)-k)>k(k-2)$, if $k=5$, then $15$ should be represented
as a sum where each summand equals $12$, $7$, or $9$, but this is
impossible.

\smallskip

Example 4. $n=2k+1, k\geqslant 3$.

$$
\left(
    \begin{array}{c}
        v_1\\
        v_2\\
        v_3\\
        \vdots \\
        v_k\\
        v_{k+1}\\
        \\
        v_{k+2}\\
        v_{k+3} \\
        \vdots\\
        v_{2k}\\
        v_{2k+1}\\
    \end{array}
\right) =
 \left(
    \begin{array}{cccccccccccc}
        0 & 1 & 1 &\ldots & 1 & 1 && 0 & 0&\ldots & 0 & 0\\
        1 & 0 & 1 &\ldots & 1 & 1 && 0 & 0&\ldots & 0 & 0\\
        1 & 1 & 0 &\ldots & 1 & 1 && 0 & 0&\ldots & 0 & 0\\
        \vdots&\vdots&\vdots&\ddots&\vdots&\vdots&&\vdots&\vdots&\ddots&\vdots&\vdots\\
        1 & 1 & 1 &\ldots & 0 & 1 && 0 & 0&\ldots & 0 & 0\\
        1 & 1 & 1 &\ldots & 1 & 0 && 0 & 0&\ldots & 0 & 0\\
        \\
        0 & 1&\ldots & 1 & 1 &  0 && 1 & 0&\ldots & 0 & 0\\
        1 & 0&\ldots & 1 & 1 &  0&& 0 & 1&\ldots & 0 & 0\\
        \vdots&\vdots&\ddots&\vdots&\vdots&\vdots&&\vdots&\vdots&\ddots&\vdots&\vdots\\
        1 & 1&\ldots & 0 & 1 &  0 && 0 & 0&\ldots & 1 & 0\\
        1 & 1&\ldots & 1 & 0 &  0 && 0 & 0&\ldots & 0 & 1\\
    \end{array}
\right)
$$

\bigskip

Let $v=(\underbrace{1,\dots,1}_{k+1},\underbrace{0,\dots,0}_k)$.
Then
$$
v=\frac{1}{k}(v_1+\dots
+v_{k+1})=\frac{1}{k}(\underbrace{k,\dots,k}_{k+1},\underbrace{0,\dots,0}_k)=
(\underbrace{1,\dots,1}_{k+1},\underbrace{0,\dots,0}_k),
$$

\begin{multline*}
(k-1)v_{k+1}-v_{k+2}-\dots-v_{2k+1}=\\
=(k-1)(\underbrace{1,\dots,1}_k,\underbrace{0,\dots,0}_{k+1})
-(\underbrace{k-1,\dots,k-1}_k,0,\underbrace{1,\dots,1}_k)=\\
=(\underbrace{0,\dots,0}_{k+1},\underbrace{-1,\dots,-1}_k)
=(\underbrace{1,\dots,1}_{k+1},\underbrace{0,\dots,0}_k)=v.
\end{multline*}
It suffices to show that $v$ does not belong to
$\ZZ_+(v_1,v_2,\ldots,v_{2k+1})$. Let
$f=(k+1)(x_1+\dots+x_k)-k(x_{k+1}+\dots+x_{2k+1})$. Then
\begin{gather*}
f(v_1)=\dots=f(v_k)=k^2-k-1,\\
f(v_{k+1})=k(k+1),\\
f(v_{k+2})=\dots=f(v_{2k+1})=k^2-k-1,\\
f(v)=k^2.
\end{gather*}
But if $k\geqslant 3$, then $k^2<2(k^2-k-1)$, so $k^2$ cannot be
represented as a sum where each summand equals either $(k^2-k-1)$ or
$k(k+1)$. This means that $v\not\in\ZZ_+(v_1,v_2,\ldots,v_{2k+1})$.

\smallskip

Example 5. $n=8, k=2$.

\begin{center}
\epsfig{figure=pictures.6}
\end{center}

The NSS will contain those and only those vectors which are sums of
two quasi-basis vectors connected with an edge in the graph above.
Let $v=e_1+e_2+e_3+e_5+e_6+e_7$. Then
$$
v =
\frac{1}{2}\left((e_1+e_2)+(e_2+e_3)+(e_1+e_3)+(e_5+e_6)+(e_6+e_7)+(e_5+e_7)
\right),
$$
$$
v = (e_1+e_2)+(e_3+e_4)-(e_4+e_5)+(e_5+e_6)+(e_5+e_7).
$$

Check that $e_1+e_2+e_3+e_5+e_6+e_7$ cannot be represented as a
$\ZZ_+$-combination of the vectors of our set. Let
${f=x_1+x_2+x_3+2(x_5+x_6+x_7)+9x_4-18x_8}$. Then
\begin{gather*}
f(e_1+e_2)=f(e_2+e_3)=f(e_1+e_3)=2,\quad
f(e_5+e_6)=f(e_6+e_7)=f(e_5+e_7)=4,\\
f(e_3+e_4)=10,\quad f(e_4+e_5)=11,\quad f(v)=9.
\end{gather*}
But $9$ cannot be represented as the sum of integers $2$, $4$, $10$,
or $11$.

\smallskip

Example 6. $n=9, k=3$.

Consider the following vectors:
\begin{gather*}
v_1=e_1+e_2+e_4,\\
v_2=e_1+e_2+e_5,\\
v_3=e_2+e_3+e_6,\\
v_4=e_2+e_3+e_7,\\
v_5=e_1+e_3+e_8,\\
v_6=e_1+e_3+e_9,\\
v_7=e_2+e_4+e_6.
\end{gather*}
Then $v=e_1+e_2+e_3=\frac13(v_1+v_2+v_3+v_4+v_5+v_6)=v_1+v_3-v_7$.
\smallskip

Check that $v$ is not a $\ZZ_+$-combination of $v_1$, $v_2$, $v_3$,
$v_4$, $v_5$, $v_6$, and $v_7$. Let
$f=5(x_1+x_2+x_3+x_4)-4(x_5+x_6+x_7+x_8+x_9)$. Then
\begin{gather*}
f(v_1)=15,\\
f(v_2)=f(v_3)=f(v_4)=f(v_5)=f(v_6)=f(v_7)=6,\\
f(v)=15.
\end{gather*}
Note that $v\ne v_1$ and $f(v_1)=f(v)$, so we conclude that if
$v\in\mathbb Z_+(v_1, \ldots, v_7)$ then $v_1$ does not occur in
this decomposition. But  $15\notkratno 6$, this means that $v$
cannot be obtained as a $\ZZ_+$-combination of $v_i$'s.

\smallskip

Example 7. $n=10, k=4$. Consider the following vectors:
\begin{gather*}
v_1=e_1+e_2+e_3+e_5,\\
v_2=e_1+e_2+e_4+e_6,\\
v_3=e_3+e_4+e_5+e_6,\\
v_4=e_5+e_6+e_7+e_8,\\
v_5=e_5+e_7+e_8+e_9,\\
v_6=e_6+e_7+e_8+e_{10},\\
v=e_1+e_2+e_3+e_4+e_5+e_6=\frac12(v_1+v_2+v_3)=v_4-v_5-v_6.
\end{gather*}

Show that $v\not\in\mathbb Z_+(v_1, \dots, v_6)$. Let
$f=x_1+x_3+x_4+6x_7+6x_8-7x_9-8x_{10}$. Then
\begin{gather*}
f(v_1)=f(v_2)=f(v_3)=2,\\
f(v_4)=12,\,f(v_5)=5,\,f(v_6)=4,\\
f(v)=3.
\end{gather*}
But it is clear that $3$ cannot be represented as a sum where each
summand equals $2$, $4$, $5$, or~$12$.

\subsubsection{Case when $n\not\kratno k, n\not\kratno (n-k)$ .}

It follows from these two conditions that $n\geqslant 5$. The
exceptional case $\frac{k}{n}\in\{\frac{2}{5}, \frac{3}{5}\}$ will
be considered at the end of the section. Further we (temporarily)
suppose that $\frac{k}{n}\not\in\{\frac{2}{5}, \frac{3}{5}\}$, which
gives $n\geqslant 7$.

\begin{lemma}
Suppose that there exists an NSS for a pair $(n,k)$, where $(n,k)$
satisfy the conditions above. Then for each $r\in\NN$, there exists
an NSS for the pair $(nr,kr)$. \label{l.oumn}
\end{lemma}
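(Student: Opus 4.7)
The plan is to obtain the NSS for the pair $(nr,kr)$ by \emph{inflation}. Let $\{v_0;v_1,\dots,v_m\}$ be an ENSS for $\pi_{k,n}$ (passing from the given NSS to an ENSS by taking a witness $v_0$ as in the definition). I would define a $\ZZ$-linear map $\phi$ from the character lattice of the maximal torus of $SL(n)$ to that of $SL(nr)$, sending the $i$th quasi-basis vector $e_i$ (for $1\le i\le n$) to the sum $e_{r(i-1)+1}+\dots+e_{ri}$ of the $i$th block of $r$ consecutive quasi-basis vectors of $SL(nr)$. This $\phi$ is well defined because $\sum_{i=1}^{n}\phi(e_i)=\sum_{j=1}^{nr}e_j=0$, so the relation $(*)$ for $SL(n)$ is sent to the relation $(*)$ for $SL(nr)$. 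Moreover, $\phi$ carries every weight $e_{i_1}+\dots+e_{i_k}$ of $\pi_{k,n}$ to a sum of $kr$ distinct quasi-basis vectors, i.e.\ a weight of $\pi_{kr,nr}$. Writing $\tilde v_i:=\phi(v_i)$, the claim is that $\{\tilde v_0;\tilde v_1,\dots,\tilde v_m\}$ is an ENSS for $(nr,kr)$.

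Linearity of $\phi$ immediately yields $\tilde v_0\in\QQ_+(\tilde v_1,\dots,\tilde v_m)\cap\ZZ(\tilde v_1,\dots,\tilde v_m)$, by applying $\phi$ to the rational and integer decompositions of $v_0$. The essential step is to show $\tilde v_0\notin\ZZ_+(\tilde v_1,\dots,\tilde v_m)$, and here I would use a descent. Let $\Phi\colon\ZZ^n\to\ZZ^{nr}$ be the lift of $\phi$ to integer coordinates, $\Phi(\varepsilon_i)=\varepsilon_{r(i-1)+1}+\dots+\varepsilon_{ri}$, and let $\Psi\colon\ZZ^{nr}\to\ZZ^n$ be the block-sum map $\Psi(\varepsilon_j)=\varepsilon_{\lceil j/r\rceil}$. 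A direct check gives $\Psi\circ\Phi=r\cdot\id$ and $\Psi((1,\dots,1)_{nr})=r\cdot(1,\dots,1)_n$. Now suppose, for contradiction, that $\tilde v_0=\sum n_i\tilde v_i$ with $n_i\in\ZZ_+$ in the character lattice of $SL(nr)$. Lifting to $\ZZ^{nr}$ gives $\Phi(\hat v_0-\sum n_i\hat v_i)=\beta(1,\dots,1)_{nr}$ for some $\beta\in\ZZ$; applying $\Psi$ yields $r(\hat v_0-\sum n_i\hat v_i)=\beta r(1,\dots,1)_n$ in $\ZZ^n$, and torsion-freeness of $\ZZ^n$ lets me divide by $r$, producing $v_0=\sum n_iv_i$ in the character lattice of $SL(n)$. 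This contradicts $v_0\notin\ZZ_+(v_1,\dots,v_m)$.

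Finally, I would remark that the hypotheses $n\notkratno k$, $n\notkratno(n-k)$ and $k/n\notin\{2/5,3/5\}$ depend only on the ratio $k/n$ and are therefore preserved under $(n,k)\mapsto(nr,kr)$, so the inflated pair still lies within the scope of the lemma. I do not expect any serious obstacle: the two delicate points are the well-definedness of $\phi$ (which reduces to checking that $(*)$ for $n$ is mapped to $(*)$ for $nr$) and the legitimacy of the division by $r$ at the end of the descent (guaranteed by torsion-freeness of $\ZZ^n$). Once these are in place, the construction is purely functorial.
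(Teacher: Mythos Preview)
Your argument is correct and is precisely the paper's approach: the paper's ``write down the quasi-coordinates $r$ times in succession'' is exactly your inflation map $\phi$ (up to an irrelevant permutation of the $nr$ coordinates). Your treatment is in fact more careful than the paper's one-sentence proof, since you explicitly supply the retraction $\Psi$ to verify $\tilde v_0\notin\ZZ_+(\tilde v_1,\dots,\tilde v_m)$, a point the paper leaves implicit.
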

\begin{proof}
Consider an arbitrary vector from $M(\pi_{k,n})$. Write down its
quasi-coordinates $r$ times in succession. The result is a vector
from $M(\pi_{kr,nr})$: it has $kr$ $1$'s and $(n-k)r$ $0$'s. If one
takes an NSS for $(n,k)$ and performs this procedure on each vector,
the result will be an NSS for $(nr,kr)$.
\end{proof}

Thus, if we construct an NSS for all pairs $(n,k)$ where
$\gcd(n,k)=1$, then the NSS for all other pairs will be also
constructed according to the Lemma.

\begin{lemma}[the Step procedure]
Having constructed an NSS for a pair $(n,k)$, one can construct an
NSS for the pair $(n+k,k)$ according to the existing
NSS.\label{l.oshage}
\end{lemma}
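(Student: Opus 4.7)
The plan is to enlarge the given NSS by one extra weight that compensates for the change of the defining relation $(*)$. Let $\{v_1,\dots,v_m\}\subset M(\pi_{k,n})$ be the NSS in $SL(n)$, with witness $v=\sum z_iv_i=\sum q_iv_i$ ($z_i\in\ZZ$, $q_i\in\QQ_+$) and $v\notin\ZZ_+\{v_i\}$. In $SL(n+k)$ with quasi-basis $e_1,\dots,e_{n+k}$, I would keep the same $m$ weights (each still lies in $M(\pi_{k,n+k})$ since it uses only the first $n$ quasi-basis indices) and append one new weight $v_{m+1}':=e_{n+1}+\dots+e_{n+k}$. The crucial identity is $v_{m+1}'=-(e_1+\dots+e_n)$ inside $\mathfrak X(T_{n+k})$, so $v_{m+1}'$ plays the role of ``$-(*)$'' of the old torus: every appeal to the old relation can be replaced by an integer multiple of a genuine weight.

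The witness in the new situation is built from any integer lift $\tilde v\in\ZZ^n$ of $v$ by appending $k$ zeros and taking the image $v'$ of $(\tilde v,0,\dots,0)$ in $\mathfrak X(T_{n+k})$. Starting from $\tilde v-\sum z_iv_i=\alpha(1,\dots,1)_n$ and $\tilde v-\sum q_iv_i=\beta(1,\dots,1)_n$ and substituting $(1,\dots,1)_n\equiv -v_{m+1}'$ in $\mathfrak X(T_{n+k})$, one converts these formally into a $\ZZ$- and a $\QQ_+$-combination of $v_1',\dots,v_{m+1}'$. The coefficient of $v_{m+1}'$ in the $\QQ_+$-expression comes out as $-\beta$, so I would first shift the lift $\tilde v\mapsto\tilde v+\gamma(1,\dots,1)_n$ by an integer $\gamma\le-\beta$ to ensure this coefficient is non-negative. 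For non-saturation, suppose $v'=\sum n_iv_i'+n_{m+1}v_{m+1}'$ with $n_i\in\ZZ_+$; reading the last $k$ coordinates in $\ZZ^{n+k}$ pins down the global shift by $(1,\dots,1)_{n+k}$, and the first $n$ coordinates then yield a $\ZZ_+$-representation of $v$ in $\mathfrak X(T_n)$, contradicting the original assumption.

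The only real point of care is that $v'$ depends on the chosen lift $\tilde v$ modulo integer multiples of $v_{m+1}'$, so the $\QQ_+$-, $\ZZ$-, and non-$\ZZ_+$-checks must all be carried out on one common lift; once $\tilde v$ is fixed so that $\beta\le 0$, everything else is a routine coordinate-wise calculation in $\ZZ^{n+k}$. I expect no structural obstacle: the argument is purely formal and uses nothing about the combinatorial shape of the original NSS, so the Step procedure upgrades any base-case construction uniformly along the recursion $n\mapsto n+k$.
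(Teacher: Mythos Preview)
Your proposal is correct and follows essentially the same construction as the paper: embed the old weights by appending $k$ zero coordinates, add the single new weight $v_{m+1}'=e_{n+1}+\cdots+e_{n+k}$ to absorb the old relation $(*)$, and recover the contradiction from a putative $\ZZ_+$-expression by restricting to the first $n$ coordinates. The only cosmetic difference is how the $\QQ_+$-coefficient of $v_{m+1}'$ is made nonnegative: the paper fixes at the outset the lift of $v$ with all coordinates in $\ZZ_{\ge 0}$ and at least one zero (so that the correction term $\alpha$ is automatically $\ge 0$), whereas you allow an arbitrary integer lift and then shift by $\gamma(1,\dots,1)_n$; these are equivalent maneuvers.
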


\begin{proof}
The keypoint is that if one takes a weight from $M(\pi_{k,n})$,
writes it down in the form where all its quasi-coordinates are equal
to $0$ or to $1$, and adds $k$ coordinates equal to $0$, then this
weight can be considered as a weight from $M(\pi_{k,n+k})$. If we
start with an ENSS $(v; v_1, v_2, \dots, v_m)$ for $(n,k)$, we
should perform this procedure on all its vectors  and then add one
more vector $v_{m+1}$ which has $0$s at the first $n$ digits and
$1$s at the $k$ adjoint digits. Now we show that the obtained set
$(v'; v'_1,\ldots,v'_m,v_{m+1})$ is indeed an NSS in
$M(\pi_{k,n+k})$.

Suppose that a vector $v$ lies in the ENSS for $(n,k)$,
$v=q_1v_1+q_2v_2+\ldots+q_sv_s$, $q_i\in\QQ_+$. If we fix some
representations for $v$ and for all $v_i$ in quasi-basis, then this
equality can be re-written in the \textit{formal} basis in the
following form:
$$
v=q_1v_1+q_2v_2+\ldots+q_sv_s-\alpha(f_1+f_2+\ldots+f_n),
$$
where $f_i$'s are the counter images of $e_i$'s under the projection
$\QQ^n\rightarrow\mathfrak X(T)\otimes_{\ZZ}\QQ$,
$(q_1,q_2,\ldots,q_n)\mapsto q_1e_1+q_2e_2+\ldots+q_ne_n$.
Obviously, $f_i$'s are linearly independent. For each $v_i$, fix a
representation in which it has $k$ coordinates equal to~$1$ and
$n-k$~coordinates equal to~$0$. The vector $v$ is nonzero,
consequently, it has a representation where all its coordinates are
nonnegative, but some of them are zeroes. Fix this representation.
Then $\alpha\geq 0$ (otherwise all coordinates of $v$ are strictly
positive), and we get that in $\QQ^{n+k}$ the following equality
holds:

$$
v'=q_1v'_1+q_2v'_2+\ldots+q_sv'_s+\alpha(f_{n+1}+\dots
+f_{n+k})-\alpha(f_1+f_2+\ldots+f_{n+k}).
$$
This shows that $v'$ lies in the $\QQ_+$-lattice generated by
$v'_1,\ldots,v'_m,v_{m+1}$ (here all vectors taken in quasi-basis
$\{e_1,\ldots,e_{n+k}\}$).

Similarly one can show that $v'$ still lies in the
$\ZZ(v'_1,\ldots,v'_m,v_{m+1})$.

To prove that the constructed set is indeed an ENSS, it remains to
show that $v'$ does not lie in $\ZZ_+(v'_1,\ldots,v'_m,v_{m+1})$.
Suppose the contrary. Let $v'\in\ZZ(v'_1,\ldots,v'_m,v_{m+1})$. Omit
last $k$ coordinates. We get that $v\in\ZZ_+(v_1,\ldots,v_m)$, so
$\{v; v_1,v_2,\ldots,v_m\}$ is not an ENSS for $(n,k)$.
\end{proof}

Now we can explain how, using these Lemmas, the NSS's can be
constructed for all pairs $(n,k)$, for which the following three
conditions are held:

$(1)\; 1<k<n-1$,

$(2)\; \gcd(n,k)=1$,

$(3)\; n\geqslant 7$.

Use descent on $n$. Suppose the NSSes are constructed for all pairs
$(m,l)$, satisfying the conditions above, with $m<n$. Take a pair
$(n,k)$. Suppose $k<\frac{n}{2}$ (otherwise change it by $n-k$ and
seek for an NSS for the pair $(n,n-k)$, the case $n=2k$ is
impossible because $\gcd(n,k)=1$). If all the conditions are held
for the pair $(n-k,k)$, then we have an NSS for it, and using the
Step procedure, this NSS can be re-made into the NSS for $(n,k)$.
Let us find all the cases when at least one of the conditions fails
for $(n-k,k)$.

Condition $(1)$ fails iff $n=2k+1$. But we have $n\geqslant 7$, then
$k\geqslant 3$. In this case we already have an NSS (example 4).

Condition $(2)$ never fails.

Condition $(3)$ fails iff $n-k\le 5$. Find these cases. Recall that
$k\le (n-1)/2$. Substitute it: $n\le (n-1)/2 +5$. This gives $n\le
9$. List all these pairs $(n,k)$ (with $k<\frac{n}{2}$).
\begin{gather*}
n=7. \quad\text{Pairs}\; (7,2)\text{ and }(7,3).\\
n=8. \quad\text{Pair}\; (8,3).\\
n=9. \quad\text{Pairs}\; (9,2)\text{ and }(9,4).
\end{gather*}

But we already have NSSes for all these pairs. Indeed, cases $(7,2)$
and $(8,3)$ coincide with Examples 1 and 2, respectively. Cases
$(7,3)$ and $(9,4)$ are the particular cases of $n=2k+1$ (Example
4). Case $(9,2)$ can be obtained from $(7,2)$ (Example 1) using the
Step procedure.

Finally, take all the cases where the NSS is already constructed as
the base of the descent. In all the other cases the descent is
feasible, consequently, we have constructed an NSS for all pairs
$(n,k)$ for which the conditions $(1)\,-\,(3)$ hold.

Now we consider the case $\frac{k}{n}\in\left\{\frac{2}{5},
\frac{3}{5}\right\}$. Let $k=2k_1$, $n=5k_1$, $k_1\geqslant 2$. When
$k_1\geqslant 4$, we can construct an NSS using the Step procedure
and substitution $k\rightarrow n-k$: starting with an NSS for
$(2k_1,k_1)$, we successively construct NSSes for $(3k_1,k_1)$,
$(3k_1,2k_1)$, and $(5k_1,2k_1)$. When $k_1=2$, the Example~7 can be
applied.

When $k_1=3$, the pair $(n,k)=(15,6)$, and the required NSS can be
obtained from Example~6 using the Step procedure.

\subsubsection{Case when $n\kratno k$ or $n\kratno (n-k)$.}
Assume that $k\leq n/2$. Then $n\kratno k$, $n=kd$. In the case when
$k=1$ all the subsets in the sets of weights are saturated
(see~\ref{tavt}), further $k\geqslant 2$.

When $k\geqslant 4$, Example~3 shows that the NSS exists for the
pair $(2k,k)$. Using the Step procedure, we can easily rebuild this
NSS into the NSS for a pair $(kd,d)$, where $d\geqslant 2$. It
remains to consider cases $k=2$ and $3$.

$k=2$. It follows from the Main Theorem that $d\geqslant 4$. But we
already have an NSS for the pair $(8,2)$ (Example~5), using the Step
procedure, we can construct NSSes for all $d>4$.

$k=3$. We already have an NSS for $(9,3)$, using the Step procedure,
we can construct an NSS for all $n\kratno 3$, $n>9$.

We are done.

\subsection{Non-fundamental weights.}

In the previous section the structure of $M(\lambda)$ was much
easier than in the general case. Indeed, by definition
$M(\lambda)=(\lambda+\Phi )\cap P$, where $P=\conv\{\sigma
\lambda\mid \sigma \in S_n\}$. If $\lambda$ is a fundamental weight,
then we just take $\{\sigma \lambda\mid \sigma \in S_n\}$, not
dealing with $\Phi$, because it does not add new points. However, if
$\lambda$ is not fundamental, then $M(\lambda)$ contains internal
points of $P$, and this is a great advantage for constructing NSSes.

\begin{lemma}[Inclusion Lemma]
Let $\lambda$ and $\lambda'$ be two dominant weights, such that
$\lambda'\in M(\lambda)$, and there exists an NSS in $M(\lambda')$.
Then there exists an NSS in $M(\lambda)$. \label{l.ovl}
\end{lemma}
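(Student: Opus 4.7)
The plan is to reduce the statement to the set-theoretic inclusion $M(\lambda')\subseteq M(\lambda)$, because non-saturatedness of a finite collection $\{v_1,\dots,v_m\}$ is a property of the collection itself (it depends only on the relation between $\ZZ_+$, $\ZZ$ and $\QQ_+$ generated by the vectors), and is independent of which weight system $M(\lambda)$ the vectors happen to lie in. Hence, once $M(\lambda')\subseteq M(\lambda)$ is established, any NSS inside $M(\lambda')$ is automatically an NSS inside $M(\lambda)$.

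To prove the inclusion I would use the description $M(\mu)=\conv\{w\mu\mid w\in W\}\cap(\mu+\Phi)$ stated in the introduction. The argument splits into two parts. First, I would show $\conv(W\lambda')\subseteq\conv(W\lambda)$: by hypothesis $\lambda'\in M(\lambda)\subseteq\conv(W\lambda)$, and since $\conv(W\lambda)$ is $W$-invariant, every $w\lambda'$ lies in $\conv(W\lambda)$, so the convex hull of $W\lambda'$ is contained in $\conv(W\lambda)$. Second, I would show $\lambda'+\Phi=\lambda+\Phi$: this is immediate from $\lambda'\in\lambda+\Phi$, which is part of the assumption $\lambda'\in M(\lambda)$. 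Intersecting the two inclusions gives $M(\lambda')\subseteq M(\lambda)$.

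With the inclusion in hand, the proof is finished in one line: if $\{v_1,\dots,v_m\}\subseteq M(\lambda')$ is an NSS (resp.\ $\{v;v_1,\dots,v_m\}$ is an ENSS), then the same set of vectors, viewed as a subset of $M(\lambda)$, remains an NSS (resp.\ ENSS), since the conditions $v\in\QQ_+(v_1,\dots,v_m)\cap\ZZ(v_1,\dots,v_m)$ and $v\notin\ZZ_+(v_1,\dots,v_m)$ refer only to the $v_i$ themselves.

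There is essentially no obstacle here; the only thing to be careful about is the $W$-invariance step and the observation that the root lattice $\Phi$ is the same for both $\lambda$ and $\lambda'$. The lemma is really just a formal consequence of the polytope description of $M(\lambda)$, and its value lies in how it is used downstream: once NSSes are produced for all fundamental weights outside the Main Theorem's list, the Inclusion Lemma automatically propagates them to all non-fundamental dominant weights $\lambda$ whose weight polytope contains such a fundamental $\lambda'$.
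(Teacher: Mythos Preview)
Your proof is correct and follows essentially the same route as the paper: establish $M(\lambda')\subseteq M(\lambda)$ via the polytope description $M(\mu)=\conv(W\mu)\cap(\mu+\Phi)$, using $W$-invariance of $\conv(W\lambda)$ for the convex-hull inclusion and $\lambda'\in\lambda+\Phi$ for the coset equality, then note that non-saturatedness is intrinsic to the subset. One minor remark: your closing sentence about how the lemma is used downstream is slightly off---in the paper the Inclusion Lemma is applied in the \emph{non-fundamental} section to reduce an arbitrary $\lambda$ to small explicit weights such as $(2,0,\dots,0,-1,-1)$ or $(1,1,0,\dots,0,-1,-1)$, not to fundamental weights (which are handled separately by the Step procedure).
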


\begin{proof}
Notice that $\forall\sigma\in W$ $\;\sigma \lambda'\in M(\lambda)$
and $P'=\conv\{\sigma \lambda'\mid  \sigma \in W\}\subset P$. Then
$M(\lambda')=(\lambda'+ \Phi )\cap P'\subset M(\lambda)$. This means
that the NSS for $\lambda'$ is also an NSS for $\lambda$.
\end{proof}

There are two cases: the first one~-- all the usual coordinates of
$\lambda$ are integer, and the second one~-- all of them are
non-integer. Consider these cases independently. The coordinates of
vectors in the usual basis will be denoted by $y_i$. The
discriminating functions in the usual basis must satisfy only the
conditions of linearity and $f(v_i)\geqslant 0$.

\subsubsection{All the coordinates of $\lambda$ are integer,
$\lambda\ne(1,0,\dots,0,-1)$.}\label{321}
\begin{definition} By a {\it Shift} we will denote the following procedure: take a point $\lambda=(y_1, \ldots, y_n)$,
fix two indices $i<j$ such that $|y_i-y_j|\geqslant 2$ and replace
$\lambda$ with the point $\lambda'$, where
$\lambda'=(\dots,y_i-1,\dots,y_j+1,\dots)$ if $y_i>y_j$ and
$(\dots,y_i+1,\dots,y_j-1,\dots)$ otherwise.
\end{definition}

The point $\lambda'$ lies in $M(\lambda)$. Indeed, $M(\lambda)$
contains the point $(\dots,y_j,\dots,y_i,\dots)$, its convex hull
with $\lambda$ (with the proper coefficient) contains $\lambda'$.
Notice that after each Shift $y_1^2+\ldots+y_n^2$ diminishes by a
positive integer. Indeed, let $x=\max\{y_i, y_j\}$, $y=\min\{y_i,
y_j\}$, then $x-y\geqslant 2$,
$$
(x-1)^2+(y+1)^2=x^2-2x+1+y^2+2y+1=x^2+y^2-2(x-y-1)\leqslant
x^2+y^2-2.
$$
This means that if we apply consequent Shifts to $\lambda$, then
this process cannot be infinite.

\begin{lemma}
If $n\geqslant 3$ and $\lambda$ satisfies the conditions of
subsection~\ref{321}, then $M(\lambda)$ contains one of the points
$(2, 0, \dots, 0, -1, -1)$, $(1,1,0,\dots,0,-2)$, or $(1, 1,
0,\dots,0, -1, -1)$, and it always contains the point $(1, 0, \dots,
0, -1)$. \label{najdetsja}
\end{lemma}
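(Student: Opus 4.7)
The plan is to translate membership in $M(\lambda)$ into a comparison of partial sums and then run a short concavity argument. Adding a suitable multiple of $(1,\dots,1)$ to $\lambda$ (which does not change the character class) we may assume $\sum y_i=0$, so every element of $M(\lambda)$ is an integer vector with zero coordinate sum. Under this normalisation, a dominant integer vector $\mu$ with $\sum\mu_i=0$ belongs to $M(\lambda)$ precisely when $\mu\preceq\lambda$, i.e.\ when the partial sums $c_i(\mu):=\mu_1+\dots+\mu_i$ satisfy $c_i(\mu)\le c_i(\lambda)$ for every $1\le i\le n-1$; the extra condition $\mu-\lambda\in\Phi$ is automatic since both vectors already lie in the root lattice. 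The key observation is that the sequence $c_i:=c_i(\lambda)$ is concave (its increments $y_{i+1}$ are non-increasing) and satisfies $c_0=c_n=0$.

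First I would prove that $c_i\ge 1$ for every $1\le i\le n-1$. Because $\lambda\ne 0$ we have $y_1\ge 1$ and $y_n\le -1$, so $c_1,c_{n-1}\ge 1$; if $c_i\le 0$ at some interior index, then taking the smallest such $i$ gives $c_i-c_{i-1}\le -1$, and non-increasing increments force $c_j\le -(j-i)$ for all $j\ge i$, contradicting $c_n=0$. Since $(1,0,\dots,0,-1)$ has partial sums identically $1$, the bound $c_i\ge 1$ at once yields $(1,0,\dots,0,-1)\preceq\lambda$ and hence $(1,0,\dots,0,-1)\in M(\lambda)$, which settles the second assertion.

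For the first assertion I would compute the partial sums of the three candidates: $(2,2,\dots,2,1)$ for $(2,0,\dots,0,-1,-1)$, $(1,2,\dots,2,2)$ for $(1,1,0,\dots,0,-2)$, and $(1,2,\dots,2,1)$ for $(1,1,0,\dots,0,-1,-1)$. When $n\ge 4$ the third candidate satisfies $\preceq\lambda$ as soon as $c_i\ge 2$ for every $2\le i\le n-2$. If some such $c_i$ equals $1$, a two-sided concavity propagation around that index (using $c_j\ge 1$ and the inequality $c_{j+1}-c_j\le c_j-c_{j-1}$ on each side) forces $c_j=1$ for every $1\le j\le n-1$, which pins $\lambda=(1,0,\dots,0,-1)$, excluded by hypothesis. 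When $n=3$ the third candidate is not defined; instead either $c_1\ge 2$ gives $(2,-1,-1)\preceq\lambda$, or $c_2\ge 2$ gives $(1,1,-2)\preceq\lambda$, and the remaining case $c_1=c_2=1$ again forces $\lambda=(1,0,-1)$. The only non-routine ingredient is this discrete concavity propagation, which is a short induction on each side of the critical index; everything else is bookkeeping on partial sums.
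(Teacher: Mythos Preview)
Your proof is correct and takes a genuinely different route from the paper's. The paper argues constructively via the \emph{Shift} operation: given indices $i<j$ with $|y_i-y_j|\ge 2$, it replaces $(y_i,y_j)$ by $(y_i\mp1,y_j\pm1)$, observes that the resulting point stays in $M(\lambda)$ (it lies on a segment between two $W$-conjugates of $\lambda$), and that $\sum y_i^2$ strictly decreases, so the process terminates. It then runs a case split --- all $|a_i|\le 1$ versus some $|a_i|\ge 2$ --- and in each case performs an ad~hoc sequence of Shifts to land on one of the three target points.

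You instead use the standard partial-sum characterisation of the dominance order for type $A_{n-1}$: for dominant $\mu$ in the root lattice, $\mu\in M(\lambda)$ iff $c_i(\mu)\le c_i(\lambda)$ for every $i$. The whole argument then becomes a discrete concavity statement about the sequence $c_i(\lambda)$, with a single clean dichotomy (either $c_i\ge 2$ on the interior, or concavity pins $c_i\equiv 1$ and hence $\lambda=(1,0,\dots,0,-1)$). This avoids the paper's case analysis entirely and, incidentally, yields the slightly sharper conclusion that for $n\ge 4$ the point $(1,1,0,\dots,0,-1,-1)$ \emph{always} lies in $M(\lambda)$, not merely one of the three candidates. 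The trade-off is that your argument presupposes the dominance-order description of $M(\lambda)$ as a black box, whereas the paper's Shift technique is elementary and self-contained, and is reused later (e.g.\ in the Good Triple Lemma for non-integer coordinates), so it earns its keep as a general tool in the paper.
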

\begin{proof}
Let $\lambda=(a_1, \dots, a_n)$ (in the usual basis). If $\forall
i\; a_i\in\{-1,0,1\}$, then, due to the fact that
$\lambda\ne(1,0,\dots,0,-1)$, $\lambda$ has at least $4$ nonzero
coordinates. Taking into account that $\sum_1^n{a_i} = 0$, at least
two of them are equal to $1$ and two are equal to $-1$. In this
case, $M(\lambda)$ contains the point $(1,1,0,\ldots,0,-1,-1)$:
split all its other coordinates into pairs $1, -1$ and make them
zero (using the Shift), then permute the remaining $4$ coordinates.
Applying one more Shift, we yield $(1, 0, \dots, 0, -1)$.

Otherwise, if $\exists i, |a_i|>1$ (one of the coordinates is {\it
big}), then $\max_{i,j}(a_i-a_j)\geqslant 3$. Keeping at least one
coordinate big, perform the shift for the pairs of indices where
$|a_i-a_j|\geqslant 2$. This process is finite. Consider a situation
where we can perform no more Shift. If we still have a nonzero
coordinate with the same sign as the big coordinate has, we can
shift it with the coordinate of the opposite sign (their difference
will obviously be $\geqslant 2$). Otherwise we are in the case where
we have a big coordinate of one sign (without loss of generality
positive) and some coordinates of the opposite sign. If the big
coordinate is $\geqslant 3$, then apply a Shift to this coordinate
and to some negative coordinate. But we have supposed that Shifts
are impossible. Then the big coordinate is equal to $2$, nonzero
ones among the other coordinates are either $-2$ or $-1$ and $-1$.
But if $M(\lambda)$ contains a point $(2,0,0,\dots,0,-2)$, then it
also contains
$(2,0,\dots,0,-1,-1)=\frac12((2,0,\dots,0,-2)+(2,0,\dots,0,-2,0))$.
We can easily get $(1, 0, \dots, 0, -1)$, performing one more Shift.
\end{proof}

Construct NSSes for the first three points.

Example 8. $\lambda=(2, 0, \dots, 0, -1, -1),\; n\geqslant 3$.

Consider vectors
\begin{gather*}
v_1 = (1, -1, 0, 0,\dots,0),\\
v_2 = (-1, -1, 2, 0,\dots,0),\\
v_3 = (2, -1, -1, 0,\dots,0), \\
v = (0, -1, 1, 0,\dots,0) =\frac{1}{2} (v_1 + v_2) = v_2 + v_3 -v_1.
\end{gather*}
Suppose $f=-y_2$, then $f(v_1)=f(v_2)=f(v_3)=f(v)=1$, but $\forall
i\; v\ne v_i$. We get a contradiction.

The NSS for the point $\lambda=(1,1,0,\dots,0,-2),\; n\ge3$ can be
constructed similarly (one should multiply all the coordinates by
$-1$).

Example 9. $\lambda=(1, 1, 0,\dots,0, -1, -1)\in M(\lambda),\; n\ge
4.$

Consider vectors

\begin{gather*}
v_1 = (1, 1, -1, -1, 0,\dots,0),\\
v_2 = (1, -1, 1, -1, 0,\dots,0),\\
v_3 = (0, 1, 0, -1, 0,\dots,0),\\
v_4 = (0, 0, 1, -1, 0,\dots,0),\\
v = (1, 0, 0, -1, 0,\dots,0) = \frac{1}{2} (v_1 + v_2) = v_1 + v_4 -
v_3.
\end{gather*}

Suppose $f=-y_4$, then $f(v_1)=f(v_2)=f(v_3)=f(v_4)=f(v)=1$, but
$v\ne v_i$ for any $i$. We get a contradiction.

Now take an arbitrary dominant weight $\lambda$, $n\geqslant 3$, and
the corresponding set $M(\lambda)$. It follows from
Lemma~\ref{najdetsja} and the Inclusion Lemma that the NSS for
$\lambda$ exists.

\smallskip

It remains to consider the case $n=2$, $\lambda=(a,-a)$. If $|a|\ge
3$, then $\conv\{(\sigma(a_1, a_2)), \sigma \in S_2\}$ contains the
points $(2, -2)$, and $(3, -3)$. But this subset is not saturated:
$$
(1, -1)=\frac{1}{2}(2, -2)=(3, -3)-(2, -2),
$$
and the vector $(1, -1)$ is not a linear combination of vectors $(2,
-2 )$ and $(3, -3)$ with integer positive coefficients. If,
otherwise, $a \in \{0, \pm 1, \pm 2 \}$, then each subset in
$M(\lambda)$ is saturated (sections~\ref{ad} and \ref{4pi}).

\subsubsection{All the coordinates of $\lambda$ are non-integer.}

\begin{lemma}
Given a point $\lambda=(a_1, a_2, \dots, a_n)$ (in the usual basis),
$n\geqslant 4$. If the set $\{a_1, a_2, \dots, a_n\}$ contains
simultaneously $\alpha +1, \alpha, \alpha -1$ for some
$\alpha\in\RR$, then the set $M(\lambda)$ contains an
NSS.\label{aa-1a+1nenas}
\end{lemma}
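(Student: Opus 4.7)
The plan is to produce an explicit ENSS inside $M(\lambda)$ consisting of four Weyl translates of $\lambda$ together with an extra vector obtained by a single root translation. Since $W=S_n$ acts on $M(\lambda)$ by permuting coordinates, I may relabel positions so that $\alpha+1,\alpha,\alpha-1$ appear at positions $1,2,3$ respectively; write $b_4,\dots,b_n$ for the remaining coordinates of $\lambda$. Consider the four Weyl conjugates
\begin{align*}
v_1 &= (\alpha+1,\alpha,\alpha-1,b_4,\dots,b_n), & v_2 &= (\alpha-1,\alpha,\alpha+1,b_4,\dots,b_n), \\
v_3 &= (\alpha+1,\alpha-1,\alpha,b_4,\dots,b_n), & v_4 &= (\alpha,\alpha-1,\alpha+1,b_4,\dots,b_n),
\end{align*}
and the auxiliary vector $v = (\alpha,\alpha,\alpha,b_4,\dots,b_n)$. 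Each $v_j$ lies in $W\lambda\subset M(\lambda)$, while $v = \tfrac{1}{2}(v_1+v_2)\in\conv(W\lambda)$ and $v-v_1=-\varepsilon_1+\varepsilon_3\in\Phi$, so $v\in(\lambda+\Phi)\cap\conv(W\lambda)=M(\lambda)$ as well.

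A direct coordinate-wise check gives $v = \tfrac{1}{2}(v_1+v_2) = v_1 - v_3 + v_4$, placing $v$ in $\QQ_+(v_1,v_2)\cap\ZZ(v_1,v_2,v_3,v_4)$. It remains to show $v\notin\ZZ_+(v_1,v_2,v_3,v_4)$. For this I will build a discriminating linear functional $f=\sum_{i=1}^n c_i y_i$ with $\sum_i c_i = 0$ (so that $f$ descends to $\mathfrak X(T)\otimes\QQ$) taking the common value $1$ on $v_1,v_2,v_3,v_4,v$. Imposing $f(v_1)=f(v_2)=f(v_3)=f(v_4)$ forces $c_1=c_2=c_3$, and the remaining conditions reduce to
\[
3c_1+\sum_{i\ge 4}c_i=0,\qquad \sum_{i\ge 4}c_i(b_i-\alpha)=1,
\]
which admit a solution provided some $b_i$ with $i\ge 4$ is different from $\alpha$. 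Granted such an $f$, any relation $v=\sum_j m_j v_j$ with $m_j\in\ZZ$, $m_j\ge 0$, would yield $\sum_j m_j = f(v) = 1$, so $v$ would coincide with one of the $v_j$; but $v$ disagrees with every $v_j$ in one of positions $1,2,3$, a contradiction.

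The main obstacle is the solvability of the above system, which fails exactly in the degenerate situation $b_4=\cdots=b_n=\alpha$. Here the standing hypothesis of this subsection, that all coordinates of $\lambda$ are non-integer, takes effect: because $\sum_{i=1}^n a_i = 0$ in the usual-basis representation, the degenerate case forces $n\alpha = 3\alpha+(n-3)\alpha = 0$ and hence $\alpha=0$, contradicting the fact that $\alpha$ shares the common nonzero fractional part of the coordinates of $\lambda$. Therefore the degenerate configuration cannot arise, the discriminating functional exists, and $\{v_1,v_2,v_3,v_4\}$ is the required NSS in $M(\lambda)$.
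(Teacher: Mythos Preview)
Your argument is correct and uses exactly the same four Weyl translates $v_1,\dots,v_4$ and the same midpoint $v=\tfrac12(v_1+v_2)$ as the paper, with an equivalent $\ZZ$-relation (the paper writes $v=v_2+v_3-v_4$). The only real difference is the discriminating functional: the paper simply takes $f=y_4/a_4$, which already gives $f(v_1)=\cdots=f(v_4)=f(v)=1$ and is legitimate because $a_4\notin\ZZ$ forces $a_4\ne 0$; your linear system and the constraint $\sum_i c_i=0$ are unnecessary, since in the usual basis the weights are fixed vectors in the hyperplane $\sum y_i=0$ rather than equivalence classes, and the verification that $v\in M(\lambda)$ is also superfluous for the NSS property.
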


\begin{proof}
It is easy to see that $M(\lambda)$ contains a point $v_1=(\alpha
+1, \alpha, \alpha -1, a_4, \dots, a_n)$, $a_4\ne 0$ (because
$a_4\not\in \mathbb Z$). Acting by $S_n$, we can get the following
points from it:
\begin{center}
\begin{tabular}{ccccccc}
$v_2$ & =&$($&$\alpha -1,$& $\alpha,$&$\alpha +1,$ & $a_4, \dots, a_n),$\\
$v_3$ &=& $($&$\alpha +1,$&$\alpha -1,$&$\alpha,$&$ a_4, \dots, a_n),$\\
$v_4$ &=& $($&$\alpha,$&$\alpha -1,$&$ \alpha +1,$&$ a_4, \dots,
a_n).$
\end{tabular}
\end{center}
Show that this set is not saturated. Indeed,
\begin{gather*}
\frac{1}{2}(v_1 + v_2) = (\alpha, \alpha, \alpha, a_4, \dots,
a_n),\\
v_2 + v_3 - v_4 = (\alpha, \alpha, \alpha, a_4, \dots, a_n),
\end{gather*}
suppose $f=\frac{y_4}{a_4}$, then
$$
f(v_1)=f(v_2)=f(v_3)=f(v_4)=f(v)=1.
$$
But $\forall i\quad v\ne v_i$, this means that $v$ is not a
$\ZZ_+$-combination of $v_i$.
\end{proof}

\begin{lemma}[Good Triple Lemma]
Let $\lambda=(a_1, \dots, a_n), n\geqslant 4$, and all $a_i$ are
non-integer. If the collection $a_1, \dots, a_n$ contains at least
three different values, then $M(\lambda)$ contains a point of form
$(\alpha +1, \alpha, \alpha -1, a_4, \dots, a_n)$.
\end{lemma}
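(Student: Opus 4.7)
The plan is to apply the Shift procedure of \S\ref{321} iteratively to $\lambda$ so as to reach a point $\mu \in M(\lambda)$ whose coordinate multiset contains three consecutive values $\alpha - 1, \alpha, \alpha + 1$; a Weyl permutation of $\mu$ then places them in positions $1, 2, 3$, producing the required vector.

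Pick three distinct values $p < q < r$ among the coordinates of $\lambda$. Since $\lambda$ lies in the weight lattice of $SL(n)$ and, by hypothesis, all $a_i$ share the same (nonzero) fractional part, the differences $q - p$ and $r - q$ are positive integers. If $q - p = r - q = 1$, then $\{p, q, r\}$ already has the form $\{\alpha - 1, \alpha, \alpha + 1\}$ with $\alpha = q$, and a Weyl permutation of $\lambda$ itself serves as $\mu$. Otherwise $r - p \geqslant 3$, and we descend on $r - p$: WLOG $r - q \geqslant 2$ (the case $q - p \geqslant 2$ is symmetric), and we perform a Shift on the position of $r$ together with a suitably chosen partner so as to bring $r$ down to $r - 1$ while raising the partner by $1$. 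Each such Shift lies in $M(\lambda)$ (as proved in \S\ref{321}) and strictly decreases $\sum y_i^2$, so the iteration terminates; the first instant $r - p$ reaches $2$, the three values form the required arithmetic progression $p', p' + 1, p' + 2$.

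The main obstacle is choosing the Shift partner so that three distinct values persist throughout. Shifting $r$ against $p$ when $q - p = 1$ collapses the updated value $p + 1$ into $q$ and may reduce the distinct count to two. Since $n \geqslant 4$, there is always a position outside those holding $p, q, r$; shifting $r$ against such a fourth position (provided its value is at most $r - 2$) leaves $p$ and $q$ untouched and preserves the distinct triple $\{p, q, r - 1\}$. When no such fourth-position value is available, a short case analysis on $(q - p, r - q)$ and on the multiplicities of $p, q, r$ in $\lambda$ shows that an alternative Shift---for instance, shifting $r$ with $q$ when $r - q = 2$---immediately produces three consecutive values in the multiset.
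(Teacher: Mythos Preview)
Your strategy---repeatedly apply the Shift so as to drive three tracked values $p<q<r$ toward a consecutive triple, using $\sum y_i^2$ for termination---is the same as the paper's. The difficulty is entirely in your last paragraph, and there the argument breaks down.

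Take $r$ to be the global maximum (which you need, though you do not say so). In the situation you isolate---every position outside $i_p,i_q,i_r$ carries a value $>r-2$---the only distinct values are $p,q,r$, and since $p,q\leqslant r-2$ the multiset must be exactly $\{p,\,q,\,r^{\,n-2}\}$ with $r-q\geqslant 2$. Your proposed move ``shift $r$ with $q$ when $r-q=2$'' then yields $\{p,\,r{-}1,\,r{-}1,\,r^{\,n-3}\}$, whose distinct values are $p,\,r{-}1,\,r$. Because $p<q=r-2$, these are \emph{never} three consecutive values. Concretely, for $\lambda=(-\tfrac52,-\tfrac12,\tfrac32,\tfrac32)$ your shift gives $(-\tfrac52,\tfrac12,\tfrac12,\tfrac32)$, with no consecutive triple. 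So the ``for instance'' is wrong, and the promised case analysis is neither stated nor carried out. Note also that this residual configuration can have $r-q$ arbitrarily large, so no single Shift can finish the job in general.

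The paper organises the descent differently and thereby avoids this trap: it keeps shifting as long as \emph{some} Shift preserves three distinct values, and shows that at a terminal configuration one necessarily has $a_{\max}-a_{\mathrm{mid}}\leqslant 2$ and $a_{\mathrm{mid}}-a_{\min}\leqslant 2$. Only the patterns $(1,1)$, $(2,2)$, and $(2,1)$ (up to symmetry) then remain, and each is handled by at most one further Shift plus a short multiplicity count using $n\geqslant 4$. Your framework can be repaired along the same lines---e.g.\ in $\{p,q,r^{\,n-2}\}$ shift $r$ with $p$ rather than with $q$, obtaining either the triple $r{-}2,r{-}1,r$ (when $r-q=2$) or a configuration with a usable fourth value for the next descent step---but this must be written out explicitly, not asserted.
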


\begin{proof}
Perform several Shifts preserving the condition that the set $\{a_1,
\dots, a_n\}$ contains at least 3 elements. Suppose further Shifts
are impossible (we mentioned above that, starting from any position,
only a finite number of Shifts is possible). Consider
$a_{\max}=\max\{a_1, \dots, a_n\}$, $a_{\min}=\min\{a_1, \dots,
a_n\}$, $a_{\midd}\in\{a_1, \dots, a_n\}$, $a_{\midd}\ne a_{\max}$,
$a_{\midd}\ne a_{\min}$. If $a_{\max}-a_{\midd}\geqslant 3$, then we
can apply the Shift to $a_{\max}$ and $a_{\midd}$, thus we obtain
three different values of coordinates $a_{\min}$, $a_{\midd}+1$,
$a_{\max}-1$. Similarly, if $a_{\midd}-a_{\min}\geqslant 3$, then at
least one more Shift is possible. So we yield $a_{\max}-a_{\midd}$,
$a_{\midd}-a_{\min}\in\{1,2\}$. If
$a_{\max}-a_{\midd}=a_{\midd}-a_{\min}=1$, we have already found a
point of necessary type in $M(\lambda)$. Up to symmetry, one of the
two cases is possible: either $a_{\min}=a_{\midd}-2$,
$a_{\max}=a_{\midd}+2$, or $a_{\min}=a_{\midd}-1$,
$a_{\max}=a_{\midd}+2$.  Consider these two cases.

In the first case, apply the Shift to $a_{\max}$ and $a_{\min}$.
This operation gives us the required triple
$(a_{\max}-1,a_{\midd},a_{\min}+1)$.

In the second case, $a_{\min}=a_{\midd}-1$, $a_{\max}=a_{\midd}+2$,
and we know that $\lambda$ has at least 4 coordinates. If there are
4 different values among them, the fourth will inevitably form a
triple of form $(\alpha+1,\alpha, \alpha-1)$ with two of $a_{\max}$,
$a_{\midd}$, $a_{\min}$. Otherwise $a_i\in
\{a_{\max},a_{\midd},a_{\min}\}$ for any $i$. But $n\geqslant 4$,
this means that at least one of the values $(a_1, a_2, \dots, a_n)$
is mentioned twice. Suppose $n=4$ (we need only $4$ $a_i$'s, forget
that there are other coordinates). The multiplicities of $(a_{\max},
a_{\midd}, a_{\min})$ may be as follows: $(\widehat{1,1,2})$,
$(\widehat{1,2},1)$, $(\widehat{2,1,1})$. Apply the shift to the
coordinates marked with the hat. We get one of the following
collections: $(a_{\midd}+1,a_{\midd},a_{\midd},a_{\midd}-1)$,
$(a_{\midd}+1,a_{\midd}+1,a_{\midd},a_{\midd}-1)$,
$(a_{\midd}+2,a_{\midd}+1,a_{\midd},a_{\midd})$. Each of them
contains a triple of form $(\alpha+1,\alpha, \alpha-1)$. But this
means that here we also find a triple of form $(\alpha+1,\alpha,
\alpha-1)$.
\end{proof}

\begin{lemma}[Absence of a good triple]
Let $\lambda=(a_1, \dots, a_n)$ be a dominant weight, $n\geqslant
4$, $\exists i$ with $|a_i|>1$, and all $a_i\not\in \mathbb Z$. If
$M(\lambda)$ does not contain a point of the form $(\alpha +1,
\alpha, \alpha -1, a_4, \dots, a_n)$, then
$$\lambda=\left(\frac{2n-2}{n}, -\frac{2}{n},
-\frac{2}{n}, \dots,
 -\frac{2}{n}\right)\; \text{or}\quad \lambda=\left(\frac{2}{n},
\frac{2}{n}, \dots,
 \frac{2}{n},-\frac{2n-2}{n} \right).
$$
\end{lemma}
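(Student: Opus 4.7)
The plan is to use the Good Triple Lemma as the sole structural input and then identify the configurations that survive. First, the contrapositive of the Good Triple Lemma (together with $n\ge 4$ and all $a_i$ non-integer) forces the multiset $\{a_1,\dots,a_n\}$ to contain at most two distinct values; since $\sum a_i=0$ and no $a_i$ is an integer they cannot all coincide, so exactly two distinct values occur and dominance gives $\lambda=(a,\dots,a,b,\dots,b)$ with $k$ copies of $a$, $n-k$ copies of $b$, $a>b$, and $1\le k\le n-1$. Differences of weight coordinates are integers (the weight lattice sits in $\ZZ^n$ modulo $\RR(1,\dots,1)$), so $m:=a-b$ is a positive integer, and $ka+(n-k)b=0$ yields $a=m(n-k)/n$ and $b=-mk/n$. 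If $m=1$ then $|a|,|b|<1$, contradicting $|a_i|>1$, so $m\ge 2$. The two displayed weights correspond precisely to $(m,k)=(2,1)$ and $(m,k)=(2,n-1)$, so it suffices to exhibit a good triple in $M(\lambda)$ whenever $(m,k)$ lies outside this pair.

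For $m=2$ and $2\le k\le n-2$, one Shift on an $(a,b)$ pair produces a point of $M(\lambda)$ whose multiset is $\{a^{k-1},c,c,b^{n-k-1}\}$ with $c=a-1=b+1$; since $k-1\ge 1$ and $n-k-1\ge 1$, the three values $a,c,b$ sit on distinct coordinates and form the triple $(c+1,c,c-1)$. For $m\ge 3$ I build the good triple point directly: fix $j\in\{1,\dots,m-1\}$ and take $x$ with first three coordinates $(b+j+1,b+j,b+j-1)$ and the remaining $n-3$ coordinates chosen from $\{b,b+1,\dots,b+m\}$. Every coordinate of any point in $\lambda+\Phi$ lies in the coset $b+\ZZ$, and the $S_n$-orbit convex hull $\conv\{\sigma\lambda\}$ is identified, via the affine substitution $y_i=(x_i-b)/m$, with the hypersimplex $\{y\in[0,1]^n:\sum y_i=k\}$. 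Membership $x\in M(\lambda)$ therefore reduces to the single condition that $n-3$ integers in $\{0,1,\dots,m\}$ sum to $mk-3j$, which is feasible precisely when $0\le mk-3j\le(n-3)m$, i.e.\ when $3j/m\le k\le n-3+3j/m$. As $j$ runs over $\{1,\dots,m-1\}$ the integer intervals $[\lceil 3j/m\rceil,\,n-3+\lfloor 3j/m\rfloor]$ cover all $k\in\{1,\dots,n-1\}$: for $m\ge 3$ the step $3/m\le 1$ keeps consecutive intervals overlapping, $\lceil 3/m\rceil=1$ supplies $k=1$, and $n-3+\lfloor 3(m-1)/m\rfloor=n-1$ supplies $k=n-1$. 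So the required $j$ always exists.

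The main obstacle I anticipate is this uniform construction for $m\ge 3$ across all $k$: iterating Shifts alone is awkward at the extremes $k\in\{1,n-1\}$ where only one of the two values has enough copies, while the hypersimplex identification of $\conv\{\sigma\lambda\}$ converts membership in $M(\lambda)$ into a transparent one-dimensional integer-partition check and handles the boundary values of $k$ uniformly.
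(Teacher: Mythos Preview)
Your argument is correct, and it diverges from the paper after the common first step (invoking the Good Triple Lemma to force exactly two distinct coordinate values). The paper then proceeds by a sign-and-magnitude case analysis: it repeatedly applies Shifts to manufacture a point with three distinct coordinate values and re-invokes the Good Triple Lemma at each stage, successively pinning down $-1<a_{\min}<0$, then that $a_{\max}$ occurs only once, then $1<a_{\max}<2$, before solving for the exceptional shape. You instead parametrize by the integer gap $m=a-b$ and the multiplicity $k$: for $m=2$ a single Shift already produces the triple $(a,c,b)=(c+1,c,c-1)$ without any further appeal to the Good Triple Lemma, and for $m\ge 3$ you bypass Shifts entirely by identifying $M(\lambda)$ with the lattice points of a dilated hypersimplex, so that producing a good-triple point reduces to the feasibility $0\le mk-3j\le (n-3)m$ for some $j\in\{1,\dots,m-1\}$, which your interval-covering check handles uniformly for all $1\le k\le n-1$. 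The paper's route is lighter on prerequisites (only Shifts and the Good Triple Lemma), whereas your hypersimplex description is more explicit and treats the boundary multiplicities $k\in\{1,n-1\}$ on the same footing as the interior ones rather than via a WLOG sign flip.
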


\begin{proof}
If the collection $a_1, \dots, a_n$ contains at least $3$ different
elements, then we can use the Good Triple Lemma and show that
$M(\lambda)$ contains a point of the desired form. This means that
$\forall i\; a_i\in \{a_{\max}, a_{\min}\}$. Without loss of
generality, we may suppose that $a_{\max}>1$, and $a_{\min}<0$
(otherwise multiply all $a_i$ by $-1$).

If $a_{\min}<-1$, then apply the Shift to  $a_{\min}$ and
$a_{\max}$. Thus we get $a_{\min}+1$ and $a_{\max}-1$ among the
values of the coordinates, and still at least one of $a_{\min}$ and
$a_{\max}$ is presented (since $n\geqslant 4>3$). Using the Good
Triple Lemma, we get a contradiction.

We see that $-1<a_{\min}<0$. If the collection $(a_1, \dots, a_n)$
contains $a_{\max}$ at least for $2$ times, then apply the shift to
$a_{\min}$ and $a_{\max}$. Now we have $a_{\max}, a_{\min}+1>0$ and
at least one time $a_{\min}$ among the values of coordinates: all
the coordinates cannot be positive. This gives us a contradiction
with the Good Triple Lemma.

Then $a_{\max}$ enters only once in $(a_1, \dots, a_n)$. If
$a_{\max}>2$, apply the Shift to $a_{\min}$ and $a_{\max}$. We get
that
 $a_{\max}-1>1$, $a_{\min}+1<1$ and $a_{\min}$ are among the values of coordinates,
which gives us a contradiction with the Good Triple Lemma.

We yield that the collection has a form $(a_{\max}, a_{\min},
a_{\min}, \dots, a_{\min})$, $1<a_{\max}<2$, $-1<a_{\min}<0$. Let
$a_{\min} = -\frac kn$. We have $(n-1)a_{\min} + a_{\max} = 0$ from
the initial conditions. This yields $a_{\max} = \frac{k(n-1)}{n}$.
But $a_{\max}<2$. Consequently,
$$
\frac{k(n-1)}{n} < 2 \;\Rightarrow\; (n-1)<2n\;\Rightarrow\;
k<\frac{2n}{n-1}<3,
$$
because $n\geqslant 4$. Taking into account that $a_{\max} > 1$, we
get $k = 2$, $a_{\max}= \frac{2n-2}{n}$, $a_{\min}=-\frac 2n$. But
in the beginning of the case we could change the signs at all the
coordinates. Thus, we have two cases: $\lambda=\left(\frac{2n-2}{n},
-\frac{2}{n}, -\frac{2}{n}, \dots,
 -\frac{2}{n}\right)$ and $\lambda=\left(-\frac{2n-2}{n}, \frac{2}{n},
\frac{2}{n}, \dots, \frac{2}{n}\right) $.
\end{proof}

Applying the Lemmas, we see that in the case, when
$a_i\not\in\mathbb Z$, $\exists i, |a_i|>1$, $n\geqslant 4$, we have
not constructed an NSS only in these two cases. In all other cases
the NSS exists due to Lemma~\ref{aa-1a+1nenas}. Let us construct an
NSS in these two cases. We may assume that
$\lambda=\left(\frac{2n-2}{n}, -\frac{2}{n}, -\frac{2}{n}, \dots,
 -\frac{2}{n}\right)=2e_1$. Let
\begin{gather*}
 v_1 = 2e_1,\\
v_2 = 2e_2,\\
w = 2e_3,\\
v_3 = e_1+e_3 \in M(\lambda),\\
v_4 = e_2+e_3 \in M(\lambda).
\end{gather*}
Then $v_1$, $v_2$, $v_3$, $v_4$ form an NSS. Indeed, we have
\begin{gather*}
v = e_1+e_2=\frac12(v_1+v_2)= v_1 + v_4 - v_3,\\
f=x_1+x_2+x_3-3x_n.
\end{gather*}
Then $f(v_1)=f(v_2)=f(v_3)=f(v_4)=f(v)=2$, and $v\ne v_i$ for any
$i$. But $2$ cannot be represented as a sum of more than one $2$s.
We get a contradiction.

Now it remains to consider the cases $n = 3$ and $n=2$.

In the case $n=3$ we suppose that the fractional parts of all
 coordinates are equal to $\frac23$ (otherwise change
$\lambda$ for $-\lambda$, as we've done earlier). If $\lambda =
\left( \frac23, \frac23, -\frac43\right)$, then each subset in
$M(\lambda)$ is saturated (see~\ref{224}). Below we construct an NSS
for $\lambda=\left( \frac53, -\frac13, -\frac43\right)=3e_1+e_2$,
then, using the Inclusion Lemma, show the existence of NSS for all
other points $\lambda$. Let
\begin{gather*}
v_1 = e_1 =
\frac23(3e_1+e_2)+\frac13(e_2+3e_3),\\
v_2 = 2e_1+2e_2= \frac12(3e_1+e_2) +
\frac{1}{2}(e_1+3e_2),\\
v_3 = 3e_1+e_2,\\
v = 2e_1+e_2 = v_1 + \frac{1}{2} v_2 = v_3 - v_1.
\end{gather*}
If $f=x_1-x_3$, then $f(v_1)=1$, $f(v_2)=2$, $f(v_3)=3$, and
$f(v)=2$. But $v$ is equal neither to $v_2$, nor to $2v_1$. We get a
contradiction.

\begin{lemma}
Suppose that $\lambda=(a_1,a_2,a_3)$ (in the usual basis) is a
dominant weight such that the fractional parts of all $a_i$ are
equal to $\frac 23$. Suppose also that there exists an index $i$
with $|a_i|>1$, and $\lambda\ne\left( \frac{2}{3}, \frac{2}{3},
-\frac{4}{3}\right)$. Then $M(\lambda)$ contains a point $\left(
\frac{5}{3}, -\frac{4}{3}, -\frac{1}{3}\right)$.
\end{lemma}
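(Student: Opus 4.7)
The plan is to verify directly that $(5/3, -4/3, -1/3)$ lies in $M(\lambda)$ using the characterization $M(\lambda) = \conv\{\sigma\lambda \mid \sigma \in S_3\} \cap (\lambda + \Phi)$ recalled in Section~1. Since $M(\lambda)$ is $S_3$-invariant, it suffices to handle the dominantly-ordered rearrangement $\mu := (5/3, -1/3, -4/3)$ and to check the two membership conditions separately.

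The root lattice condition $\mu - \lambda \in \Phi$ should be immediate from the hypothesis on fractional parts: since all coordinates of both $\mu$ and $\lambda$ have fractional part $2/3$, the difference $\mu - \lambda$ has integer coordinates, and since both $\mu$ and $\lambda$ sum to $0$, their difference is an integer vector summing to $0$, i.e.\ an element of the root lattice of $SL(3)$. For the convex hull condition I would invoke the standard fact that, for dominant weights in $SL(n)$, one has $\mu \in \conv(S_n \cdot \lambda)$ if and only if $\sum_{i=1}^{k} \mu_i \leq \sum_{i=1}^{k} \lambda_i$ for all $k$, with equality at $k = n$. In the present situation this reduces to the two inequalities $a_1 \geq 5/3$ and $a_3 \leq -4/3$.

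The only place where the hypotheses really enter is an elementary integer enumeration, which I expect to be the main (if mild) obstacle. Writing $a_i = m_i + \frac{2}{3}$ with $m_i \in \ZZ$, dominance and the zero-sum condition give $m_1 \geq m_2 \geq m_3$ and $m_1 + m_2 + m_3 = -2$, which already force $m_1 \geq 0$ and $m_3 \leq -1$. The target inequalities become $m_1 \geq 1$ and $m_3 \leq -2$. A short case-split shows that the only integer triples $(m_1, m_2, m_3)$ violating one of these are $(0, 0, -2)$, i.e.\ $\lambda = (2/3, 2/3, -4/3)$, which is excluded by hypothesis, and $(0, -1, -1)$, for which all $|a_i| < 1$ and so the hypothesis that some $|a_i| > 1$ fails. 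Hence both target inequalities hold whenever the hypotheses of the lemma are satisfied, proving $\mu \in M(\lambda)$.
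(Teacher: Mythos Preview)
Your argument is correct. The root-lattice step is exactly right, and the majorization criterion for membership in the permutohedron $\conv(S_3\cdot\lambda)$ is standard (and is essentially the partial order~$\succeq$ the paper defines in Section~1). Your integer enumeration is complete: with $m_1+m_2+m_3=-2$ and $m_1\ge m_2\ge m_3$, the cases $m_1=0$ and $m_3=-1$ each lead only to $(0,0,-2)$ or $(0,-1,-1)$, both ruled out by the hypotheses.

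The paper proceeds differently. Rather than invoking the majorization criterion, it argues in the style of the rest of Section~3: first it shows directly that some coordinate is $\ge 5/3$ (using that otherwise the large coordinate must be $\le -4/3$, forcing $\lambda=(2/3,2/3,-4/3)$), and then it applies repeated Shifts to reach the point $(5/3,-1/3,-4/3)$ while always preserving a coordinate $\ge 5/3$. Your route is shorter and more systematic for this particular lemma, and it avoids the somewhat informal ``and so on'' at the end of the paper's argument; the paper's route has the virtue of staying within the Shift machinery already set up, so no outside fact about permutohedra needs to be quoted.
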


\begin{proof}
It follows from the conditions of the lemma that $\exists i,
a_i\ge\frac 53$. Indeed, otherwise we have at least $2$ positive
coordinates, each of them $\le \frac 23$, but due to the condition
of the Lemma there exists an $a_i$ such that $|a_i|>1$. Suppose it
is $a_1$. We have $a_1=-a_2-a_3\geqslant -\frac 43$. This means that
$\lambda=\left( -\frac{4}{3}, \frac{2}{3}, \frac{2}{3}\right)$. We
get a contradiction.

If only one coordinate of $\lambda$ is positive, and it is equal to
$\frac 53$, then $\lambda=\left( \frac{5}{3}, -\frac{4}{3},
-\frac{1}{3}\right)$, and the Lemma is proved. Otherwise either
$\lambda$ has two positive coordinates, or one of them is $\geqslant
\frac 83$. In both cases we can apply the Shift to a positive and a
negative coordinate, such that after it $\lambda$ still has a
coordinate $\ge\frac 53$, and so on.
\end{proof}

Consider the case $n = 2$. Suppose $\lambda = (\frac{a}{2},
-\frac{a}{2})$, $a \in \mathbb N$, $a$ is odd. Then for $a=3$ each
subset in $M(\lambda)$ is saturated (see~\ref{3pi}). When $a
\geqslant 5$, we construct an NSS. Let
$$
v_1 = \left(\frac{3}{2}, -\frac{3}{2}\right),\; v_2 =
\left(\frac{5}{2}, -\frac{5}{2}\right).
$$
Then $\left(\frac{1}{2}, -\frac{1}{2}\right) = \frac{1}{3}v_1 = 2v_2
- v_1 \not\in \mathbb Z_+(v_1, v_2)$.

So we have constructed non-saturated subsets in the sets of weights
for all the representations not listed in the Main Theorem.

\end{document}